\documentclass[12pt]{amsart}
\usepackage{graphics,wrapfig, graphicx, mathrsfs,xcolor}
\usepackage{mathtools}
\usepackage{amssymb,amscd,graphicx,xcolor,subfig,tikz}
\usepackage{graphics}
\usepackage{indentfirst}
\usepackage{bm, enumerate,xcolor}
\usepackage[dvips]{epsfig}
\usepackage{latexsym}
\usepackage{float}
\usepackage[colorlinks]{hyperref}
\AtBeginDocument{
   \hypersetup{
    linkcolor=blue,
    citecolor=blue,
 }
}

\usepackage{amsmath}
\usepackage{amssymb}
\usepackage[applemac]{inputenc}
\usepackage[T1]{fontenc}
\newtheorem{lemma}{Lemma}[section]
\newtheorem{theorem}{Theorem}[section]
\newtheorem{corollary}{Corollary}[section]

\newtheorem{definition}{Definition}[section]
\newtheorem{remark}{Remark}[section]

\numberwithin{equation}{section} \numberwithin{theorem}{section}
\numberwithin{example}{section} \numberwithin{remark}{section}
\numberwithin{figure}{section} \numberwithin{algorithm}{section}
\mathtoolsset{showonlyrefs}

\title[Classification in a Lipschitz epigraphical cone]{Classification of global solutions to the singular equation $-\Delta u=f(X)\cdot u^{-\gamma}$ in a Lipschitz epigraphical cone}
\author{Yahong Guo}
\address{School of Mathematical Sciences, Shanghai Jiao Tong University, Shanghai 200240, China}\email{yhguo@sjtu.edu.cn}
\author{Congming Li}
\address{School of Mathematical Sciences and CMA-Shanghai, Shanghai Jiao Tong University, China}\email{congming.li@sjtu.edu.cn}
\author{Chilin Zhang}
\address{School of Mathematical Sciences, Fudan University, Shanghai 200433, China}\email{zhangchilin@fudan.edu.cn}

\begin{document}
\begin{abstract}
    We construct and classify all global solutions to the singular equation $$-\Delta u=f(X)\cdot u^{-\gamma}$$ supported in a general Lipschitz epigraphical cone, where $f(X)$ is a locally Dini continuous function with $0<\lambda\leq f(X)\leq\Lambda$. The existence and non-existence of a global solution is solely determined by the exponent $\gamma$ of the equation and the ``frequency" of the cone.
    
    Moreover, in order to classify all global solutions, we introduce several new methods. First, we use the local data to estimate the global growth rate, which in turn establishes the boundedness of the ``asymptotic slope" of the global solution. Second, by establishing a nonlinear variant of Kemper's boundary Harnack principle, we classify all global solutions through an ``oscillation reduction" argument on the ``asymptotic slope".
\end{abstract}
\maketitle

\section{Introduction}
\subsection{Background}

In this paper, we study classical global solutions of the singular elliptic equation
\begin{equation}\label{eq. main}
    \left\{\begin{aligned}
        &-\Delta u=f(X)u^{-\gamma}\mbox{ and }u>0&\mbox{in }&Cone_{\Sigma},\\
        &u=0&\mbox{on }&\partial Cone_{\Sigma}.
    \end{aligned}\right.
\end{equation}
where $Cone_{\Sigma}\subseteq\mathbb{R}^{n}$ is a Lipschitz epigraphical cone, and $f(X)$ is a locally Dini continuous function satisfying $0<\lambda\leq f(X)\leq\Lambda$. We aim to determine the precise range of parameters for which such solutions exist, and also to provide a complete classification of all global solutions in that range.

The singular Lane-Emden-Fowler equation
\begin{equation}\label{eq. SLEF}
    -\Delta u=f(X)\cdot u^{-\gamma}
\end{equation}
has been studied since the work of Fulks-Maybee \cite{FuMa}, Stuart \cite{Stu}, and Crandall-Rabinowitz-Tartar \cite{CrRaTa}. It also appears in the study of pseudoplastic fluids \cite{NaCa}. Later work developed existence, uniqueness, regularity, and energy theories for singular semilinear equations, see \cite{BO10,dP92,Edelson1989,G86,GL93,KusanoSwanson1985,LM91,OP18}. The main difficulty in \eqref{eq. SLEF} comes from the negative power of $u$. The zero Dirichlet boundary data renders the equation singular near the boundary. When the domain $D$ is $C^{1,1}$, the precise growth rate near the boundary is obtained in Gui-Lin \cite{GL93}:
\begin{equation}\label{eq. Gui-Lin estimate}
    u(X)\sim
    \begin{cases}
        d(X)^{\frac{2}{1+\gamma}}, & \gamma>1,\\[3pt]
        d(X)\bigl|\log d(X)\bigr|^{1/2}, & \gamma=1,\\[3pt]
        d(X), & 0<\gamma<1,
    \end{cases}
    \qquad d(X):=\operatorname{dist}(X,\partial D).
\end{equation}
Interestingly, the border case $\gamma=1$ for the growth rate trichotomy in \eqref{eq. Gui-Lin estimate} is also the border case of the existence and nonexistence of global solutions $u:\mathbb{R}^{n}_{+}\to\mathbb{R}_{+}$ to
\begin{equation}\label{eq. half space problem}
        \left\{\begin{aligned}
            &-\Delta u=u^{-\gamma}&\mbox{in }&\mathbb{R}^{n}_{+},\\
            &u=0&\mbox{on }&\partial\mathbb{R}^{n}_{+}.
        \end{aligned}\right.
    \end{equation}
    As obtained in Montoro-Muglia-Sciunzi \cite{MMS24b,MMS24a}, if $\gamma\leq1$, then there exists no global solution to \eqref{eq. half space problem}; if $\gamma>1$, then all global solutions to \eqref{eq. half space problem} must be one-dimensional. This result has been extended to the fractional case in \cite{GW25,GuoZhang2026}.

%Entire positive solutions in the whole space and other unbounded domains were studied from a different viewpoint in \cite{Edelson1989,KusanoSwanson1985}. %The present paper, by contrast, considers the equation on a cone with zero boundary data and provides a complete classification of all global solutions in this setting, {\color{red} thereby extending the results of Montoro, Muglia, and Sciunzi \cite{MMS24b,MMS24a} from the half-space to a general Lipschitz epigraphical cone.}

The situation changes when the domain is merely Lipschitz. In particular, $\gamma=1$ is not necessarily the border case of the growth rate trichotomy as in \eqref{eq. Gui-Lin estimate}. Instead, what really matters is the relation between the scaling exponent $\alpha=\frac{2}{1+\gamma}$ and the ``frequency" $\phi$ of the tangent cone (see Definition~\ref{def. frequency}) . These observations lead to our recent work \cite{GLZ25}, which gives well-posedness and local boundary growth estimates.% Besides, more accurate estimates have been established in some conical regions (see \cite{HZ25,WZ25}). These results concern the behavior of solutions near a boundary point in a bounded or truncated domain and lay the groundwork for the present paper.

Naturally, we expect that the comparison between $\alpha=\frac{2}{1+\gamma}$ and $\phi$ (the ``frequency" of the Lipschitz epigraphical cone $Cone_{\Sigma}$, see Definition~\ref{def. Lipschitz epigraphical cone} and \ref{def. frequency}) also decides the existence and nonexistence of global solution to \eqref{eq. main}. In this paper, we show that global solutions to \eqref{eq. main} exist if and only if $\alpha<\phi$. Besides, in this range, we construct a family of global solutions $\Psi_{K}$'s where
\begin{equation}\label{eq. K means max in B_R}
    K=\lim_{R\to\infty}\big(R^{-\phi}\cdot\max_{B_{R}}\Psi_{K}\big)\in[0,\infty),
\end{equation}
and show that there are no other global solutions to \eqref{eq. main}. A detailed formulation of our main result will be given in Theorem~\ref{thm. non-existing}-\ref{thm. classification}. Our result includes the classification in \cite{MMS24b,MMS24a} as a special case, see Corollary~\ref{cor. great extension, a glimpse}.

The main difficulty in our classification lies in the lack of symmetry of $Cone_{\Sigma}$ (other than scaling invariance) and that $f(X)$ is not a constant. Then, the ODE analysis in \cite{MMS24b} is no longer valid, and the Kelvin transform method in \cite{MMS24a} is no longer applicable. Instead, our construction of global solutions is based on a compactness argument, where the growth rate estimate in our previous work \cite{GLZ25} plays a fundamental role. Moreover, by developing a nonlinear variant of Kemper's boundary Harnack principle \cite{K72}, we establish the growth rate estimate at infinity using solely local information, and classify global solutions to \eqref{eq. main} by reducing the oscillation of the ``asymptotic slope".

\subsection{Main results}
Before stating the main results, we need to clarify several notations and settings.
We first define a Lipschitz epigraphical cone.
\begin{definition}\label{def. Lipschitz epigraphical cone}
    Let $\Sigma\subseteq\partial B_{1}$ be an open spherical domain. Let $Cone_{\Sigma}$ be the cone of $\Sigma$ defined as
    \begin{equation*}
        Cone_{\Sigma}=\{X\in\mathbb{R}^{n}\setminus\{0\}:\frac{X}{|X|}\in\Sigma\}.
    \end{equation*}
    We say that $Cone_{\Sigma}$ is a Lipschitz epigraphical cone (in the $\vec{e}_{n}$-direction), if there exists an entire Lipschitz function $g:\mathbb{R}^{n-1}\to\mathbb{R}$, such that
    \begin{equation*}
        Cone_{\Sigma}=\{x_{n}>g(x')\}.
    \end{equation*}
    We also let $\Gamma$ be the boundary of $Cone_{\Sigma}$, i.e.:
    \begin{equation*}
        \Gamma=\partial Cone_{\Sigma}=\{x_{n}=g(x')\}.
    \end{equation*}
\end{definition}
\begin{remark}
    One can easily verify that for a Lipschitz epigraphical cone, the function $g$ satisfies $g(t\cdot x')=t\cdot g(x')$ for all $t\geq0$.
\end{remark}
We next define the ``frequency" of the cone $Cone_{\Sigma}$.
\begin{definition}\label{def. frequency}
    Let $Cone_{\Sigma}$ be a Lipschitz epigraphical cone. We let $\lambda_{\Sigma}$ be the first eigenvalue of $\Sigma$ with respect to the Laplace-Beltrami operator on the sphere, namely:
    \begin{equation}\label{eq. first eigenvalue}
        \lambda_{\Sigma}=\inf_{F(\theta)\in C^{\infty}_{0}(\Sigma)}\frac{\int_{\Sigma}|\nabla F(\theta)|^{2}d\theta}{\int_{\Sigma}|F(\theta)|^{2}d\theta}
    \end{equation}
    We then let $\phi_{\Sigma}>0$ be the unique positive solution to
    \begin{equation*}
        \phi_{\Sigma}(\phi_{\Sigma}+n-2)=\lambda_{\Sigma}.
    \end{equation*}
    We call $\phi_{\Sigma}$ the ``frequency" of $Cone_{\Sigma}$.
\end{definition}
The word ``frequency" is borrowed from the theory developed by Almgren, and it is used to describe the growth rate of a homogeneous harmonic polynomial. In fact, $\phi_{\Sigma}$ defined above is exactly the degree of a positive homogeneous harmonic function in $Cone_{\Sigma}$.
\begin{definition}\label{def. H(X)}
    We define $H_{\Sigma}(X)$ as a positive homogeneous harmonic function in $Cone_{\Sigma}$ in the following way. Let $E(\theta)\in C^{\infty}(\Sigma)\cap C^{0}(\overline{\Sigma})$ be the minimizer of \eqref{eq. first eigenvalue}, which is normalized such that $\displaystyle\max_{\theta\in\Sigma}E(\theta)=1$. Then we define
    \begin{equation}\label{eq. homogeneous harmonic H}
        H_{\Sigma}(X)=H_{\Sigma}(r,\theta)=r^{\phi_{\Sigma}}E(\theta)
    \end{equation}
    written in the polar coordinate with $(r,\theta)=(|X|,\frac{X}{|X|})$. Clearly, $H_{\Sigma}(X)$ is a positive homogeneous harmonic function in $Cone_{\Sigma}$ with degree $\phi_{\Sigma}$.
\end{definition}

Finally, we introduce a few cylindrical domains, see also \cite[Definition 1.3]{GLZ25}.
\begin{definition}\label{def. cylindrical domains}
    Let $\Gamma$ be a Lipschitz graph, given by $\{x_{n}=g(x')\}$ where $g$ is a Lipschitz function satisfying $g(0)=0$. For every $X=(x',g(x'))\in\Gamma$, we define
    \begin{itemize}
        \item A grounded cylinder $\mathcal{GC}_{r}(X)$ as
        \begin{equation*}
            \mathcal{GC}_{r}(X):=\{Y=(y',y_{n}):\quad|x'-y'|\leq r,\quad0\leq y_{n}-g(y')\leq r\};
        \end{equation*}
        \item A suspended cylinder $\mathcal{SC}_{r,\delta}(X)$ as
        \begin{equation*}
            \mathcal{SC}_{r,\delta}(X):=\{Y=(y',y_{n}):\quad|x'-y'|\leq r,\quad\delta r\leq y_{n}-g(y')\leq r\}.
        \end{equation*}
    \end{itemize}
    The small $\delta\leq\frac{1}{10}$ will be chosen later. If there is no risk of ambiguity (especially in the value of $\delta$), we will adopt the following abbreviations:
    \begin{equation*}
        \mathcal{GC}_{r}=\mathcal{GC}_{r}(0),\quad\mathcal{SC}_{r}=\mathcal{SC}_{r,\delta}=\mathcal{SC}_{r,\delta}(0).
    \end{equation*}
\end{definition}

Now we present the main results in this paper. For the sake of convenience, throughout the paper we denote
\begin{equation*}
    \alpha=\frac{2}{1+\gamma},\quad\phi=\phi_{\Sigma},\quad H(X)=H_{\Sigma}(X).
\end{equation*}
Our first theorem gives the sharp nonexistence range.
%Our first result is the non-existing theorem when the power $\gamma$ is small.
\begin{theorem}[Nonexistence]\label{thm. non-existing}
    Assume that $\alpha\geq\phi$ and $f(X)\geq\lambda>0$, then \eqref{eq. main} admits no global solution. 
\end{theorem}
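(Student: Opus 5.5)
I will argue by contradiction: suppose $u$ is a global solution with $\alpha\geq\phi$. The idea is to test the equation against the homogeneous harmonic function $H$ from Definition~\ref{def. H(X)} on truncated cones, and to use superharmonicity of $u$ to get a lower bound on its growth.

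\textbf{Step 1: lower bound on growth.} Since $-\Delta u\geq0$ in $Cone_{\Sigma}$, $u=0$ on $\Gamma$ and $u>0$, a boundary Harnack/comparison argument on $Cone_{\Sigma}\cap B_{R}$ should give $u\geq c\,H$ on $Cone_{\Sigma}\setminus B_{1}$. The comparison function is the harmonic function with data $u$ on $\partial B_{1}\cap Cone_\Sigma$, which is asymptotic to a multiple of $H$ at infinity. In particular $u(X)\gtrsim |X|^{\phi}E(\theta)$ far away.

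\textbf{Step 2: Green-type identity on annular cones.} Set $\Omega_{R}=Cone_{\Sigma}\cap B_{R}$ and integrate $-\Delta u\cdot H$ over it. Using $\Delta H=0$, $u=H=0$ on $\Gamma$, and $\partial_{\nu}H\leq0$ there, we get
\begin{equation*}
\int_{\Omega_R} f\,u^{-\gamma}H\leq \int_{\partial B_R\cap Cone_\Sigma}(u\,\partial_r H-H\,\partial_r u)\,d\sigma .
\end{equation*}
The Lipschitz boundary is handled by approximating with interior smooth domains. Write $m(R)=\int_{\Sigma}u(R\theta)E(\theta)\,d\theta$. A direct computation gives that the right-hand side equals $-R^{n-1+2\phi}\frac{d}{dR}\big(R^{-\phi}m(R)\big)$. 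Setting $J(R)=\int_{\Omega_R}f u^{-\gamma}H$, we obtain
\begin{equation*}
\frac{d}{dR}\big(R^{-\phi}m(R)\big)\leq -R^{1-n-2\phi}J(R)\leq0 .
\end{equation*}
Hence $R^{-\phi}m(R)$ is nonincreasing, so $m(R)\leq CR^{\phi}$. This is the key upper bound: on average $u\lesssim R^{\phi}$.

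\textbf{Step 3: contradiction.} By Jensen and convexity of $t\mapsto t^{-\gamma}$ with respect to the probability weight $E\,d\theta/\!\int E$, we get $\int_\Sigma u(r\theta)^{-\gamma}E\,d\theta\gtrsim m(r)^{-\gamma}\gtrsim r^{-\gamma\phi}$. Then
\begin{equation*}
J(R)\geq\lambda\int_{1}^{R} r^{\phi+n-1}\cdot r^{-\gamma\phi}\,dr\sim R^{\,n+\phi(1-\gamma)}
\end{equation*}
when the exponent is positive. Hence $\frac{d}{dR}(R^{-\phi}m)\leq -cR^{1-\phi(1+\gamma)}$. Now $\phi(1+\gamma)\leq2$ is exactly $\alpha\geq\phi$. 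So the right side is at least as negative as $-cR^{-1}$, which is not integrable at infinity. This forces $R^{-\phi}m(R)\to-\infty$, contradicting $m\geq0$. (If the exponent $n+\phi(1-\gamma)$ is not positive, the estimate on $J$ only improves.)

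\textbf{Main obstacle.} The delicate point is justifying the Green identity rigorously near the Lipschitz boundary $\Gamma$. There $u$ is only continuous up to the boundary, and $u^{-\gamma}$ is singular, so we must know that $f u^{-\gamma}H$ is integrable. I would handle this by using $H$ vanishing at $\Gamma$ together with the local boundary growth estimates of \cite{GLZ25} (which give $u\gtrsim$ a positive multiple of the local harmonic profile). Alternatively, I would integrate over $\{u>\varepsilon\}\cap B_R$ and let $\varepsilon\to0$, using the sign of the boundary terms and monotone convergence. Step 1 is actually unnecessary for the contradiction; only Steps 2–3 are needed.
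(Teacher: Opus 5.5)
Your Steps 2--3 are correct and give a proof genuinely different from the paper's. The paper argues by comparison. It takes $u\geq v_R$, where $v_R$ solves $-\Delta v_R=\lambda v_R^{-\gamma}$ in $Cone_\Sigma\cap B_R$ with zero data. The scaling $v_R(X)=R^{\alpha}v_1(X/R)$ and the boundary lower bounds of Lemma~\ref{lem. growth rate near the boundary}(b),(c) then give $u(\vec{e}_n)\geq cR^{\alpha-\phi}$, resp.\ $u(\vec{e}_n)\geq c(\ln R)^{\phi/2}$, which is absurd as $R\to\infty$. That proof is short, but it rests on the bounded-domain theory of \cite{GLZ25}. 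It needs solvability and comparison for the singular problem in Lipschitz domains and, when $\alpha=\phi$, the delicate logarithmic lower bound (c). Your first-eigenfunction argument avoids all of this. You combine the ODE inequality $\bigl(R^{n-1+2\phi}(R^{-\phi}m)'\bigr)'\leq-\lambda R^{n-1+\phi}\int_\Sigma u(R\theta)^{-\gamma}E\,d\theta$ with Jensen. This is self-contained and handles the borderline case simply through the divergence of $\int^\infty R^{-1}\,dR$. Once the Green identity is done as below, the Dirichlet condition on $\Gamma$ is used only to ensure $m(1)<\infty$.

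Three repairs are needed.

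\textbf{The truncation.} Your fallback $\{u>\varepsilon\}\cap B_R$ does not work by signs. On $\{u=\varepsilon\}$ the term $-H\partial_\nu u=H|\nabla u|\geq0$ lands on the right-hand side, which is the wrong direction for an upper bound on $J(R)$. Truncate with $H$ instead: test the equation against $\zeta=\psi(|X|)(H-\varepsilon)_+$ with $\psi\geq0$. Then:
\begin{itemize}
\item Since $E\in C^0(\overline{\Sigma})$ vanishes on $\partial\Sigma$, $\zeta$ has compact support in the open cone.
\item The level sets $\{H=\varepsilon\}$ are smooth, because $X\cdot\nabla H=\phi H\neq0$ there.
\item $\Delta(H-\varepsilon)_+$ is the nonnegative measure $|\nabla H|\,d\sigma$ on $\{H=\varepsilon\}$, so it contributes $-\int_{\{H=\varepsilon\}}\psi u|\nabla H|\,d\sigma\leq0$.
\item Because $\psi$ is radial, only the bounded quantity $\partial_rH=\phi r^{\phi-1}E$ enters the cross term.
\end{itemize}
Letting $\varepsilon\to0$, with monotone convergence on the left, gives your inequality in distributional form. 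Finiteness of $J(R)$ comes out as a consequence rather than a hypothesis, and no information on $\nabla u$ or $u^{-\gamma}$ near $\Gamma$ is needed. It also shows that $R^{n-1+2\phi}(R^{-\phi}m)'$ is a.e.\ a nonincreasing function. Hence $R^{-\phi}m$ is locally Lipschitz, and integrating the inequality in Step 3 is legitimate.

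\textbf{Step 1.} It is not merely unnecessary but false as written. The bounded harmonic function in $Cone_\Sigma\setminus\overline{B_1}$ with data $u$ on $\partial B_1$ and $0$ on $\Gamma$ decays like $|X|^{2-n-\phi}E(\theta)$; it does not behave like a multiple of $H$. Also, for $\alpha<\phi$ the solution $\Psi_0\leq C|X|^{\alpha}$ violates $u\geq cH$. Delete this step.

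\textbf{The hedge on the exponent.} The case $n+\phi(1-\gamma)\leq0$ cannot occur. From $\phi\gamma\leq2-\phi$ you get $n+\phi(1-\gamma)\geq n-2+2\phi>0$. Had that case occurred, the bound on $J$ would have become weaker, not better.
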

The second result is the construction of global solutions to \eqref{eq. main} when $\alpha<\phi$.
\begin{theorem}[Existence]\label{thm. construction}
    Assume that $\alpha<\phi$, and $f(X)$ is a locally Dini function in $Cone_{\Sigma}$ satisfying $0<\lambda<f(X)\leq\Lambda$.  Then there is a family of global solutions $\{\Psi_K\}_{K\geq0}$ of \eqref{eq. main} with the following properties.%\eqref{eq. main} has a family of global solutions $\Psi_{K}(X)$ for $K\geq0$ such that:
    \begin{itemize}
        \item[(1)] The ground solution $\Psi_{0}$ satisfies$$\Psi_{0}(X)\leq C|X|^{\alpha}\ \mbox{in}\ Cone_{\Sigma},$$ where $C$ depends on $(\Sigma,\gamma,\lambda,\Lambda)$;
        \item[(2)] For all $K\geq0$, it holds that
        \begin{equation}\label{eq. Psi_K best estimate}
            \max\{\Psi_{0}(X),K\cdot H(X)\}\leq\Psi_{K}(X)\leq\Psi_{0}(X)+K\cdot H(X) \ \mbox{in}\ Cone_{\Sigma};
        \end{equation}
        \item[(3)] If $0\leq k\leq K$ , then
        \begin{equation*}
            0\leq\Psi_{K}(X)-\Psi_{k}(X)\leq(K-k)\cdot H(X) \ \mbox{in}\ Cone_{\Sigma}.
        \end{equation*}
    \end{itemize}
\end{theorem}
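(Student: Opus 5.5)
We construct the solutions $\Psi_K$ by exhaustion: solve Dirichlet problems on the truncated domains $D_R=Cone_{\Sigma}\cap B_R$ and pass to the limit $R\to\infty$. The approximate problems will be well posed by the results of \cite{GLZ25}, and $f$ being locally Dini gives classical solutions with local regularity estimates.

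\textbf{Step 1 (approximations).} For $R\geq1$ and $K\geq0$, let $u_{R,K}$ be the solution of
\[
-\Delta u=f(X)u^{-\gamma}\ \text{in } D_R,\qquad u=K\,H \ \text{on } \partial D_R .
\]
This boundary data is $0$ on $\Gamma$ and equals $K\,H$ on $\partial B_R\cap Cone_\Sigma$. The nonlinearity $u\mapsto u^{-\gamma}$ is decreasing, so the comparison principle holds. It gives three facts:
\begin{itemize}
\item $u_{R,K}$ is increasing in $R$, since $u_{R',K}\geq0=u_{R,K}$ on $\Gamma$ and $u_{R',K}\geq K\,H$ on $\partial B_R$. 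The second inequality holds because $K\,H$ is a subsolution on $D_{R'}$.
\item $K\,H\leq u_{R,K}$ and $u_{R,0}\leq u_{R,K}$.
\item $u_{R,K}\leq u_{R,0}+K\,H$, because $u_{R,0}+K\,H$ is a supersolution: $-\Delta(u_{R,0}+KH)=f\,u_{R,0}^{-\gamma}\geq f\,(u_{R,0}+KH)^{-\gamma}$.
\end{itemize}
The same argument gives $0\leq u_{R,K}-u_{R,k}\leq(K-k)H$ for $k\leq K$.

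\textbf{Step 2 (uniform bound for $K=0$).} This is the main obstacle. We need $u_{R,0}\leq C|X|^{\alpha}$ with $C$ independent of $R$; this is where $\alpha<\phi$ enters. The plan is to build a global supersolution of the form
\[
W=A\,|X|^{\alpha}G(\theta),
\]
where $G$ is positive on $\Sigma$ and vanishes on $\partial\Sigma$, the truly homogeneous analogue of the barriers in \cite{GLZ25}.

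First try $G=E^{s}$ for suitable $s\in(0,1]$. Here $\alpha<\phi$ gives $\alpha(\alpha+n-2)<\lambda_\Sigma$, which makes the angular operator $-\Delta_\theta-\alpha(\alpha+n-2)$ coercive on $\Sigma$. So we can solve $-\Delta_\theta G-\alpha(\alpha+n-2)G=G^{-\gamma}$ (or a comparable inequality) on $\Sigma$ by the singular-ODE/sub-supersolution theory of \cite{GLZ25} applied on the sphere. Then
\[
-\Delta W=A\,|X|^{\alpha-2}\bigl(-\Delta_\theta G-\alpha(\alpha+n-2)G\bigr),
\]
and since $\alpha-2=-\gamma\alpha$, this equals $A\,|X|^{-\gamma\alpha}G^{-\gamma}$. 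Choosing $A^{1+\gamma}\geq\Lambda$ makes $W$ a supersolution.

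If solving the spherical problem proves delicate on a Lipschitz $\Sigma$, the fallback is to use the boundary growth estimate of \cite{GLZ25} in scaled form. The bound $u_{R,0}\leq C R^{\alpha}$ on $D_{R/2}$, uniform by scaling $u_R(RX)/R^\alpha$, combined with monotonicity in $R$, then gives $u_{R,0}(X)\leq C|X|^\alpha$. Either way, comparison on $D_R$ with $W\geq0$ on $\partial D_R$ gives $u_{R,0}\leq W\leq C|X|^{\alpha}$.

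\textbf{Step 3 (limit).} By Steps 1 and 2, $u_{R,K}$ is increasing in $R$ and locally bounded by $C|X|^\alpha+K\,H$. It is also bounded below by $u_{1,0}>0$ on compact subsets of the interior. Local Dini-Schauder estimates give $C^2$ compactness in the interior, and the upper barrier gives continuity up to $\Gamma$ with zero boundary value. Hence $\Psi_K=\lim_{R\to\infty}u_{R,K}$ solves \eqref{eq. main}. The inequalities of Step 1 pass to the limit and yield (1), \eqref{eq. Psi_K best estimate}, and (3).
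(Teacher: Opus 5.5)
Your Step 1 is sound. Using boundary data $KH$, so that $u_{R,K}$ increases in $R$, instead of the paper's data $\Psi_0+KH$ with decreasing limits taken after $\Psi_0$ is built, is a legitimate variant: every inequality in (2) and (3) already holds for each $R$ and passes to the limit.

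The gap is Step 2, the only place where $\alpha<\phi$ enters. Your primary route needs a positive $G$ on $\Sigma$, vanishing on $\partial\Sigma$, with $-\Delta_\theta G-\alpha(\alpha+n-2)G\geq c\,G^{-\gamma}$. None of the results quoted from the earlier work covers this singular problem on a Lipschitz spherical domain with that zero-order term. A degree-$\alpha$ homogeneous supersolution is exactly what you are trying to produce, and the paper only obtains one (the function $\Phi$) as a consequence of this theorem. The explicit candidate $G=E^{s}$ does not rescue it. Since $-\Delta_\theta E^{s}-\alpha(\alpha+n-2)E^{s}=(s\lambda_\Sigma-\alpha(\alpha+n-2))E^{s}+s(1-s)E^{s-2}|\nabla_\theta E|^{2}$, you need $s\lambda_\Sigma>\alpha(\alpha+n-2)$ at the maximum point of $E$, and $s\leq\alpha$ wherever $E$ vanishes linearly. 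For the cone $\{x_n>-L|x'|\}$, where $\phi<1$ and $\partial\Sigma$ is smooth, these two conditions are incompatible once $\alpha$ is close to $\phi$.

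Your fallback names the right tool, but the deduction you describe does not work. A bound $u_{R,0}\leq CR^{\alpha}$ plus monotonicity in $R$ cannot give $u_{R,0}(X)\leq C|X|^{\alpha}$ uniformly in $R$. Monotonicity only bounds $u_{R,0}(X)$ from \emph{below} by $u_{R',0}(X)$ for $R'<R$. What is needed is the decay at the vertex, Lemma~\ref{lem. growth rate near the boundary}(a). One way, as in the paper: compare $u_{R,0}\leq V_R$, where $V_R$ solves the problem with $f\equiv\Lambda$ and $V_R(X)=R^{\alpha}V_1(X/R)$. Applying (a) to $V_1$ gives $V_R(X)\leq CR^{\alpha}(|X|/R)^{\alpha}=C|X|^{\alpha}$ on $D_{R/2}$. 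On $D_R\setminus D_{R/2}$ the trivial bound $R^{\alpha}\|V_1\|_{\infty}\leq 2^{\alpha}\|V_1\|_{\infty}|X|^{\alpha}$ suffices.

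On this route the bound $C|X|^{\alpha}$ does not vanish on $\Gamma$. So your Step 3 claim that ``the upper barrier gives continuity up to $\Gamma$'' no longer applies. You need an $R$-uniform boundary modulus of continuity, for instance Lemma~\ref{thm.bdy holder} applied to the rescaled approximations as in the paper, to get $\Psi_0=0$ on $\Gamma$. After that, $\Psi_K\leq\Psi_0+KH$ handles $K>0$.

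A minor point: $u_{1,0}$ is defined only on $D_1$. For interior positivity on a compact set, use $u_{R_0,0}$ with $R_0$ large, or Lemma~\ref{lem. non degenerate}.
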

Since $\alpha<\phi$, estimate \eqref{eq. Psi_K best estimate} implies
\[
    \frac{\Psi_K(r\theta)}{H(r\theta)}\longrightarrow K
    \qquad \text{as }r\to\infty,
\]
for each $\theta\in\Sigma$. Therefore, $K$ is the asymptotic harmonic slope of $\Psi_K$. Besides, since $\Psi_{0}(X)\leq C|X|^{\alpha}\ll R^{\phi}$ in $B_{R}$, one has \eqref{eq. K means max in B_R}. This gives an intrinsic meaning to the parameter even though $f$ need not be homogeneous and no explicit formula is available.

The third result affirms that there are no additional global solutions.
\begin{theorem}[Classification]\label{thm. classification}
    Under the same assumptions in Theorem~\ref{thm. construction}, 
    %for any $K\geq0$, the functions $\Psi_{K}$'s constructed in Theorem~\ref{thm. construction} are unique subject to their corresponding inequality requirements. Moreover, 
    any global solution to \eqref{eq. main} must be one of the $\Psi_{K}$'s constructed in Theorem~\ref{thm. construction}.
\end{theorem}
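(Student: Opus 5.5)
Given an arbitrary global solution $u$ of \eqref{eq. main}, the plan is to show that it has a well-defined asymptotic harmonic slope $K\in[0,\infty)$ and then that $u=\Psi_{K}$.

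\textbf{Step 1: growth bound.} We first prove that $u$ grows at most like $|X|^{\phi}$. The starting point is the local boundary growth estimate of \cite{GLZ25}. Applied at scale $R$, it controls $u$ in $\mathcal{GC}_{R}$ by its values on the suspended cylinder $\mathcal{SC}_{R}$ plus an error of order $R^{\alpha}$. Combined with an interior Harnack inequality, this should give
\[
u\leq C\Big(\frac{u(A_{1})}{H(A_{1})}\,H+|X|^{\alpha}\Big)\quad\text{on }Cone_{\Sigma},
\]
where $A_{1}$ is a fixed interior point. This is the ``local data controls global growth'' mechanism. We then set
\[
M=\limsup_{|X|\to\infty}\frac{u}{H},\qquad m=\liminf_{|X|\to\infty}\frac{u}{H},
\]
and the growth bound guarantees $M<\infty$.

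\textbf{Step 2: boundary Harnack and oscillation reduction.} Consider $w_{K}=u-\Psi_{K}$. Since $\Psi_{0}$ is negligible relative to $H$, the ratio $w_{K}/H$ has the same oscillation at infinity as $u/H$. The key tool is a nonlinear Kemper-type boundary Harnack principle. Let $u,v$ be two solutions on $\mathcal{GC}_{R}$ with $u\geq v$. The difference satisfies $-\Delta(u-v)=c(X)(u-v)$, with
\[
c(X)=-f\,\frac{u^{-\gamma}-v^{-\gamma}}{u-v}\geq0 .
\]
This zeroth-order coefficient has the unfavourable sign (and, as written, appears on the right with a plus sign), but it is bounded by roughly $d^{-2}$ times a power of $d^{\alpha}/u$. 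Because $\alpha<\phi$, one gets $c\,d^{2}\to0$ relative to the harmonic scaling far from the origin. We would then show that $(u-v)/H$ is comparable to its value at the interior point of $\mathcal{SC}_{R}$ in $\mathcal{GC}_{R/2}$, up to an error of size $R^{\alpha-\phi}$.

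With this in hand, apply it at dyadic scales $R_{j}=2^{j}$ to $u-\Psi_{k}$ and $\Psi_{k'}-u$ with $k<m\leq M<k'$, using Theorem~\ref{thm. construction}(3) for the comparison. This yields
\[
\operatorname{osc}_{\mathcal{GC}_{R_{j}}\setminus\mathcal{GC}_{R_{j-1}}}\frac{u}{H}\leq(1-\theta)\operatorname{osc}_{\text{previous}}+CR_{j}^{\alpha-\phi}.
\]
Iterating gives $m=M=:K$.

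\textbf{Step 3: uniqueness.} With $u/H\to K$ and $\Psi_{K}/H\to K$, we have $u-\Psi_{K}=o(H)$ at infinity and $u-\Psi_{K}=0$ on $\Gamma$. The nonlinearity $t\mapsto f\,t^{-\gamma}$ is decreasing, so the maximum principle applies to $(u-\Psi_{K})^{+}$ in its positivity set. There $-\Delta(u-\Psi_{K})\leq0$, so $(u-\Psi_{K})^{+}$ is subharmonic in the cone. Being $o(H)$, the Phragmén--Lindelöf principle in cones (using $H$ as barrier: $(u-\Psi_{K})^{+}\leq\varepsilon H+$ boundary terms on $B_{R}$, then $R\to\infty$) forces it to vanish. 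Symmetrically, $u=\Psi_{K}$.

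The main obstacle is Step 2. The difficulty is the nonlinear boundary Harnack estimate near the Lipschitz boundary, where the coefficient $c(X)$ blows up as $d^{-2}$-type singular, and making the error genuinely $R^{\alpha-\phi}$. The first thing I would try is to absorb $c$ via the barrier $\Psi_{0}$ and the lower bound $u\gtrsim d^{\alpha}$ from \cite{GLZ25}.
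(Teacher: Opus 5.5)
Your outline (growth bound, oscillation reduction at infinity, identification of the limit) matches the paper's architecture. The inequality you aim for in Step~1 is also exactly the paper's intermediate estimate. But Step~2 is built on ratios to $H$, and these break down at $\Gamma$, so the step does not go through as written.

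\textbf{Why $u/H$ is unusable.} By Lemma~\ref{lem. non degenerate}, $u\geq c\,d(X)^{\alpha}$, while $H$ vanishes like $d(X)$ at smooth points of $\Gamma$. Take the half-space with $f\equiv1$ and $\gamma>1$, so $\alpha<\phi=1$. There $c_0x_n^{\alpha}$ is a global solution, and $c_0x_n^{\alpha}/H=c_0x_n^{\alpha-1}$ is unbounded on every annulus. Consequently:
\begin{itemize}
\item ``$M<\infty$'' fails; your own Step~1 bound only gives $u/H\leq C+C|X|^{\alpha}/H$.
\item $\Psi_0$ is negligible relative to $H$ only non-tangentially.
\item $\operatorname{osc}\,u/H=\infty$ on each $\mathcal{GC}_{R_j}\setminus\mathcal{GC}_{R_{j-1}}$.
\end{itemize}

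\textbf{Differences do not help.} For solutions $u\geq v$, $w=u-v$ solves $\Delta w=c\,w$ with $c=\gamma f\xi^{-1-\gamma}$. Here $c\,d^{2}$ is of order one in a boundary layer at \emph{every} scale; in the example $c\,x_n^{2}\to\gamma\alpha(1-\alpha)$. It is small at best away from $\Gamma$, and this critical potential forces $w$ to vanish faster than $H$. Take $u=\psi_K(x_n)$, the one-dimensional solution with slope $K$ at infinity, and $v=c_0x_n^{\alpha}$. Then $w$ is convex with $w''\gtrsim x_n^{-2}w$. This rules out $w'(0^+)>0$, since that would make $w''\gtrsim x_n^{-1}$, which is not integrable at $0$. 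Hence $w=o(x_n)$ at $0$, while $w\approx Kx_n$ for large $x_n$. So $(u-v)/H$ sweeps from about $0$ to about $K$ on every annulus.

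Your linear-type boundary Harnack claim (``$(u-v)/H$ is comparable to its value at $R\vec{e}_{n}$ on $\mathcal{GC}_{R/2}$ up to $O(R^{\alpha-\phi})$'') is therefore false. The iteration cannot close even when $u$ itself belongs to the family. Bounding $c$ via $\Psi_0$ and $u\gtrsim d^{\alpha}$, as you suggest, only returns $c\lesssim d^{-2}$, which is precisely the obstruction.

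\textbf{The missing idea: measure the slope through the nonlinear family itself.} The paper sets $k_R=\sup\{t:u\geq\Psi_t\text{ in }\mathcal{GC}_R\}$ and $K_R=\inf\{t:u\leq\Psi_t\text{ in }\mathcal{GC}_R\}$, which are monotone in $R$. It proves $K_{R/2}-k_{R/2}\leq(1-\sigma)(K_{2R}-k_{2R})$ whenever $K_{2R}-k_{2R}\geq\epsilon$ and $R\geq\underline{R}(\epsilon)$. Letting $R\to\infty$ gives $k_\infty=K_\infty$, and $\Psi_{k_\infty}\leq u\leq\Psi_{K_\infty}$ then identifies $u$. No Phragm\'en--Lindel\"of step is needed.

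The reduction compares $u$ with $\Psi_{k'}$, $k'=k+\sigma(K-k)$, and never with $H$ near $\Gamma$. Its ingredients are:
\begin{itemize}
\item a dichotomy at $R\vec{e}_{n}$;
\item the interior Harnack inequality for the linearized equation, used only on $\mathcal{SC}_R$, where $c\lesssim R^{-2}$;
\item the bound $|\Psi_{k'}-\Psi_k-\sigma(K-k)H|\leq\Phi\lesssim R^{\alpha}\ll R^{\phi}$, which is where $\alpha<\phi$ and $R\geq\underline{R}(K-k)$ enter;
\item the one-sided positivity Lemma~\ref{lem. ensure positive}, applied to $v=u-\Psi_{k'}$. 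It needs only $\Delta v\leq Cd^{-2}v^{+}$ and $v\geq-(\Psi_{k'}-\Psi_k)$, so the critical potential is absorbed by the exact solutions $\Psi_t$ rather than by $H$.
\end{itemize}

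\textbf{A smaller issue in Step~1.} An interior Harnack chain from $A_1$ to scale $R$ loses a factor $R^{C}$ with a non-sharp $C$. The bound you state needs Kemper's boundary Harnack principle for the harmonic replacement $h_R$ against $H$, together with $h_R\leq u\leq h_R+\Psi_0$, as in Lemma~\ref{lem. growth rate at infinity}.
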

Theorems~\ref{thm. non-existing}--\ref{thm. classification} give a complete answer to the classification problem. The condition $\alpha<\phi$ is necessary and sufficient for global solvability, and the solution set in this range is a one-parameter ordered family. The threshold is geometric: a narrower cone has a larger first spherical eigenvalue and hence a larger characteristic exponent, while a wider cone has a smaller one. These main results therefore identifies precisely how the strength of the singularity and the opening of the cone interact.

We emphasize the relation with the existing literature. The local Lipschitz-domain theory in our previous work \cite{GLZ25} supplies the boundary estimates that make the construction possible. The half-space results of \cite{MMS24b,MMS24a} are recovered when $f\equiv1$ and $\phi=1$. Our result goes beyond that setting in two directions. It treats general Lipschitz epigraphical cones, and allows a non-constant coefficient $f(X)$. In particular, the classification does not rely on tangential translations, moving planes, or a reduction to an ordinary differential equation.

An immediate geometric consequence of Theorem \ref{thm. classification} is that the global solutions are independent of the choice of vertex when the cone has multiple vertices or a flat edge. Consider, for example, the cone
\begin{equation*}
    \{(x,y,z)\in\mathbb{R}^{3}:z\geq|x|\},
\end{equation*}
then it follows from Theorem~\ref{thm. classification} that the global solutions $\Psi_{K}$'s do not depend on how we choose the vertex of the cone. In particular, this observation leads to the following one-dimensional symmetry result, which is a generalization of \cite{MMS24b,MMS24a}.
\begin{corollary}\label{cor. great extension, a glimpse}
    Let $f(X)=f(x_{n})$ satisfy $f\in[\lambda,\Lambda]$ and be locally Dini-continuous. Then, when $\gamma>1$, global solutions $u(X):\mathbb{R}^{n}_{+}\to[0,\infty)$ to the problem
    \begin{equation}\label{eq. translation invariant problem}
        \left\{\begin{aligned}
            &-\Delta u=f(x_{n})u^{-\gamma}&\mbox{in }&\mathbb{R}^{n}_{+},\\
            &u=0&\mbox{on }&\partial\mathbb{R}^{n}_{+}
        \end{aligned}\right.
    \end{equation}
    must be one-dimensional, i.e.: $u(X)=u(x_{n})$. On the other hand, when $0<\gamma\leq1$, then the problem \eqref{eq. translation invariant problem} admits no global solutions.
\end{corollary}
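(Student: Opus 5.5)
The corollary follows from the three theorems once we identify the constants for the half space. The half space $\mathbb{R}^{n}_{+}=\{x_{n}>0\}$ is a Lipschitz epigraphical cone with $g\equiv0$ and $\Sigma$ the upper hemisphere. Its first Dirichlet eigenfunction is $E(\theta)=\theta_{n}$, so $\lambda_{\Sigma}=n-1$ and $\phi=1$, with $H(X)=x_{n}$. Since $\alpha=\frac{2}{1+\gamma}$, the condition $\alpha<\phi$ is exactly $\gamma>1$, and $\alpha\geq\phi$ is exactly $0<\gamma\leq1$. The nonexistence part of the corollary is therefore Theorem~\ref{thm. non-existing} applied with $f(X)=f(x_{n})\geq\lambda$. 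One small point is that the corollary allows $u:\mathbb{R}^{n}_{+}\to[0,\infty)$. Positivity in the open half space holds automatically because $-\Delta u\geq0$ and the equation with $u^{-\gamma}$ forces $u>0$ wherever it is classical. So $u$ is a solution of \eqref{eq. main} in the sense of the theorems.

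For $\gamma>1$, Theorem~\ref{thm. classification} says every global solution $u$ equals some $\Psi_{K}$. The parameter $K$ is determined intrinsically by $u$ through \eqref{eq. K means max in B_R}, or equivalently by $K=\lim_{r\to\infty}u(r\theta)/H(r\theta)$. The key step is translation invariance. Fix $h'\in\mathbb{R}^{n-1}$ and set $v(X)=u(X+(h',0))$. Because $f$ depends only on $x_{n}$ and the half space is invariant under horizontal translations, $v$ is again a global solution of \eqref{eq. translation invariant problem}. By Theorem~\ref{thm. classification}, $v=\Psi_{K'}$ for some $K'\geq0$.

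It remains to show $K'=K$. The function $H(X)=x_{n}$ is itself invariant under horizontal translation. From \eqref{eq. Psi_K best estimate} we have
\[
u(X)=Kx_{n}+O(|X|^{\alpha})
\]
with $\alpha<1$, and hence
\[
v(X)=Kx_{n}+O(|X+(h',0)|^{\alpha})=Kx_{n}+O(|X|^{\alpha}+|h'|^{\alpha}).
\]
Dividing by $x_{n}$ along the ray $r\vec e_{n}$ and letting $r\to\infty$ gives $K'=K$. This is an application of the remark after Theorem~\ref{thm. construction}, which states that the asymptotic slope along rays equals $K$. We conclude that $v=\Psi_{K}=u$ for every $h'$, so $u$ depends only on $x_{n}$.

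The only step that needs care is the identification of the slope after translation. This is handled by the two-sided bound \eqref{eq. Psi_K best estimate} together with $\alpha<\phi=1$: the ground-state part grows sublinearly, so a bounded shift cannot change the linear coefficient. No further obstacle is expected, since all the heavy lifting is contained in Theorems~\ref{thm. non-existing}--\ref{thm. classification}.
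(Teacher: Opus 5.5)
Your proposal is correct and follows the paper's own argument. The paper simply says the corollary follows from Theorems~\ref{thm. non-existing}--\ref{thm. classification} together with translation invariance, and omits the details. You identify $\phi=1$ and $H(X)=x_n$ for the half space, so that $\alpha<\phi$ exactly when $\gamma>1$, and you use \eqref{eq. Psi_K best estimate} with $\alpha<1$ to force $K'=K$ for a horizontal translate $u(\cdot+(h',0))=\Psi_{K'}$; this supplies precisely the omitted step.
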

\begin{proof}
    It is a direct corollary of Theorem~\ref{thm. non-existing}, Theorem~\ref{thm. construction}, Theorem~\ref{thm. classification}, as well as the translation invariant nature of \eqref{eq. translation invariant problem}. We omit the details.
\end{proof}

\subsection{Main ideas}

The proof contains two difficulties that do not arise in a bounded-domain existence theory. First, no growth assumption is imposed at infinity. We must prove that an arbitrary global solution has at most the harmonic growth $|X|^{\phi}$. Second, after obtaining this bound, we must exclude all solutions other than the constructed family $\{\Psi_K\}$.

The argument has four main steps.

\begin{itemize}
%[label=\textup{(\roman*)},leftmargin=2.5em]
    \item \emph{The solvability threshold.} We compare a hypothetical global solution with solutions in truncated cones for the constant lower coefficient $\lambda$. Scaling the truncated problem and using the local lower bound estimate shows that the value at a fixed interior point diverges as the truncation radius tends to infinity when $\alpha\geq\phi$. This proves Theorem~\ref{thm. non-existing}.

    \item \emph{Construction of the model family.} In the range $\alpha<\phi$, increasing bounded-domain approximation produces the ground solution $\Psi_0$. For $K>0$, we solve truncated problems with boundary values $\Psi_0+K H$. The inequalities in \eqref{eq. Psi_K best estimate} give uniform local bounds and allow passage to a global limit. This construction works without any homogeneity of $f$.

    \item \emph{From local information to a global growth bound.} Let $h_R$ be the harmonic replacement of an arbitrary global solution $u$ in a large truncated cone. Comparison gives
    \[
        h_R\leq u\leq h_R+\Psi_0.
    \]
    Using Kemper's boundary Harnack principle \cite{K72}, one approximates $h_R$ with a constant multiple of $H$. Evaluating the approximation function at a fixed interior point controls its coefficient uniformly in $R$. This yields
    \[
        \sup_{B_R\cap\operatorname{Cone}_{\Sigma}}u\leq C(u)R^{\phi}
    \]
    and then $u\leq\Psi_{\overline{K}}$ for some $\overline{K}$.

    \item \emph{Reduction of the slope interval.} Once
    \[
        \Psi_k\leq u\leq\Psi_K
    \]
    holds on a large scale, we prove that the interval $[k,K]$ can be shortened by a fixed factor on a smaller scale. Such an ``oscillation reduction" argument is based on a nonlinear variant of the boundary Harnack principle. In fact, by analyzing the linearized equation of \eqref{eq. SLEF}, we obtain an interior Harnack estimate in the suspended cylinder. Then, a boundary positivity argument improves either the lower parameter or the upper parameter. Iteration forces the limiting interval to collapse to one point. This proves Theorem~\ref{thm. classification}. 
\end{itemize}

The last two steps form the main new mechanism of the paper. They replace the translation and one-dimensional tools available in the half-space by a method based on harmonic replacement, boundary Harnack principle, and quantitative reduction of oscillation. The resulting classification is stable under variable coefficients and nonsmooth conical geometry. It also identifies all global cone profiles that can arise in boundary blow-up arguments for \eqref{eq. SLEF}.

\subsection{Organization of the paper}
The rest of this paper is organized as follows.

In Section~2, we collect several preliminary results used throughout the paper and prove the nonexistence theorem (Theorem \ref{thm. non-existing}) for the case $\alpha\geq \phi$.  %In Section 2, we recall several preliminary results, with particular emphasis on the growth rate estimates  near the boundary. Building upon these estimates, we give a direct proof of  the nonexistence theorem (Theorem \ref{thm. non-existing})  for the case $\alpha\geq \phi$. 

In Section 3, we construct global solutions to \eqref{eq. main} in the regime $\alpha< \phi$ (Theorem \ref{thm. construction}), namely the ground solution  $\Psi_0$ and the family $\{\Psi_K\}_{K > 0}$ with ``asymptotic slope" $K$.

In Section 4, we establish a global growth rate estimate for arbitrary solutions (Lemma \ref{lem. growth rate at infinity}) and then shows that every solution lies below some $\Psi_{\overline{K}}$ (Corollary~\ref{cor. bounded by psi K}).

In Section 5, by developing a nonlinear version of the boundary Harnack principle (Lemma \ref{lem. nonlinear BHP, if case 1 holds} and Lemma \ref{lem. nonlinear BHP, if case 2 holds}), we obtain the oscillation reduction estimate (Lemma \ref{lem. reduction of oscillation}), and thereby complete the proof of the classification theorem (Theorem \ref{thm. classification}).

%Section 4 establishes a global growth rate estimate for arbitrary solutions (Lemma \ref{lem. growth rate at infinity}) and then shows that every solution lies below some $\Psi_{\overline{K}}$. This result serves as a key role in deriving the oscillation reduction estimate (Lemma \ref{lem. reduction of oscillation}) established in Section 5, where we develop a nonlinear version of the boundary Harnack principle (Lemma \ref{lem. nonlinear BHP, if case 1 holds} and Lemma \ref{lem. nonlinear BHP, if case 2 holds}) as the central tool, and thereby complete the proof of the classification theorem (Theorem \ref{thm. classification}).
\section{Preliminaries}
\subsection{Review of several known results}
First we recall the following pointwise lower bound estimate in \cite[Lemma 2.2]{GLZ25}. 
\begin{lemma}[Pointwise lower bound]\label{lem. non degenerate}
    Assume that $u\geq0$ satisfies $-\Delta u\geq\lambda u^{-\gamma}$ in $B_{r}$, then
    \begin{equation*}
        u(0)\geq c(n,\gamma,\lambda)r^{\alpha}.
    \end{equation*}
\end{lemma}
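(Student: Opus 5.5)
The plan is a comparison with an explicit radial subsolution of the form $c\,(r^{2}-|X|^{2})^{\alpha}$, using only the minimum principle. By translation and scaling it suffices to treat the case $B_r$ centered at $0$. Set $\alpha=\frac{2}{1+\gamma}\in(0,2)$ and consider
\begin{equation*}
    w(X)=c\,(r^{2}-|X|^{2})^{\alpha},\qquad X\in B_{r}.
\end{equation*}
This $w$ vanishes on $\partial B_{r}$, so $u\geq0=w$ there. The first step is to choose the constant $c=c(n,\gamma,\lambda)$ so that $-\Delta w\leq\lambda w^{-\gamma}$ in $B_{r}$, that is, so that $w$ is a subsolution. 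Write $s=r^{2}-|X|^{2}$. A direct computation gives
\begin{equation*}
    -\Delta w=c\big(2n\alpha\, s^{\alpha-1}-4\alpha(\alpha-1)|X|^{2}s^{\alpha-2}\big)\leq c\big(2n\alpha\, s^{\alpha-1}+4\alpha|1-\alpha|\,r^{2}s^{\alpha-2}\big).
\end{equation*}
On the other hand $\lambda w^{-\gamma}=\lambda c^{-\gamma}s^{-\alpha\gamma}$, and $-\alpha\gamma=\alpha-2$. Since $s\leq r^{2}$, we have $s^{\alpha-1}\leq r^{2}s^{\alpha-2}$, so the right-hand side above is at most $C(n,\gamma)\,c\,r^{2}s^{\alpha-2}$. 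The unwanted factor $r^{2}$ disappears after rescaling: take $r=1$ first and recover general $r$ via $u_{r}(X)=r^{-\alpha}u(rX)$, which preserves the inequality $-\Delta u\geq\lambda u^{-\gamma}$. For $r=1$ it then suffices to pick $c$ with $C(n,\gamma)c^{1+\gamma}\leq\lambda$.

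The second step is the comparison $u\geq w$ in $B_{1}$. The nonlinearity $t\mapsto\lambda t^{-\gamma}$ is decreasing, so the comparison principle for $-\Delta v=\lambda v^{-\gamma}$ applies. Suppose $w-u$ has a positive maximum at an interior point $X_{0}$. Then $u(X_{0})<w(X_{0})$, so $\lambda u^{-\gamma}(X_{0})>\lambda w^{-\gamma}(X_{0})$. Combining this with the two differential inequalities gives $-\Delta(w-u)(X_{0})\leq\lambda w^{-\gamma}-\lambda u^{-\gamma}<0$, which contradicts $\Delta(w-u)(X_{0})\leq0$ at a maximum. Here $u>0$ wherever the equation is meaningful; if $u$ vanishes somewhere, then $u<w$ there anyway, and we use the supersolution inequality in the viscosity or distributional sense on the set $\{u>0\}$. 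The boundary values are $u\geq0=w$. Hence $u\geq w$, and in particular $u(0)\geq w(0)=c\,r^{2\alpha}$ when $r=1$.

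The third step fixes the exponent, and this is where care is needed. The value $w(0)=c\,r^{2\alpha}$ scales as $r^{2\alpha}$, not $r^{\alpha}$, so the correct profile is $w=c\,(r^{2}-|X|^{2})^{\alpha/2}$, whose value at $0$ is $c\,r^{\alpha}$. I therefore redo the computation with exponent $\beta=\alpha/2\in(0,1)$. Then
\begin{equation*}
    -\Delta w=c\big(2n\beta s^{\beta-1}+4\beta(1-\beta)|X|^{2}s^{\beta-2}\big),
\end{equation*}
which blows up like $s^{\beta-2}$ at the boundary. Meanwhile $w^{-\gamma}\sim s^{-\beta\gamma}$, and $-\beta\gamma=\beta-1>\beta-2$, so the subsolution inequality fails near $\partial B_{r}$. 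Among $C^{2}$ profiles vanishing on the sphere, the distance-function power $c\,(r-|X|)^{\alpha}$ is the natural candidate. The main obstacle is the sign of its radial second derivative near the boundary, since $\alpha$ may exceed $1$. I would handle this by truncating instead. Take $w=c\big((r^{2}-|X|^{2})_{+}\big)^{\alpha/2}$ restricted to $B_{r/2}$. On $B_{r/2}$ we have $s\simeq r^{2}$ and all terms are comparable, so the subsolution inequality reduces to $C(n,\gamma)c\,r^{\alpha-2}\leq\lambda c^{-\gamma}r^{-\alpha\gamma}$. Since $\alpha-2=-\alpha\gamma$, this holds for $c$ small. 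On $\partial B_{r/2}$, however, $w$ is positive, so the boundary comparison $u\geq w$ is no longer free.

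The simplest fully robust route is instead a cutoff supersolution argument. Let $v$ solve $-\Delta v=\lambda v^{-\gamma}$ in $B_{r}$ with $v=0$ on $\partial B_{r}$; this exists by the standard theory of Crandall–Rabinowitz–Tartar. By uniqueness and scaling, $v(X)=r^{\alpha}v_{1}(X/r)$, where $v_{1}$ is the solution on $B_{1}$. The comparison in the second step gives $u\geq v$ in $B_{r}$. Hence
\begin{equation*}
    u(0)\geq r^{\alpha}v_{1}(0)=c(n,\gamma,\lambda)\,r^{\alpha},
\end{equation*}
and $v_{1}(0)>0$ by the strong maximum principle.
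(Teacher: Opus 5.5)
The paper does not prove this lemma; it imports it from \cite{GLZ25}. Your closing argument is correct. You compare $u$ with $v(X)=r^{\alpha}v_{1}(X/r)$, where $v_{1}$ is the Crandall--Rabinowitz--Tartar solution in $B_{1}$. No uniqueness is needed, since the scaling formula itself produces a solution with zero boundary data. This is the same comparison-plus-scaling device the paper uses in the proof of Theorem~\ref{thm. non-existing}, with $B_{r}$ in place of $Cone_{\Sigma}\cap B_{R}$. That route is the one that generalizes to cones, where no explicit barrier is available.

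However, your ``third step'' rests on a scaling error, and it led you to discard an argument that was already complete and more elementary. In your first two steps you normalized to $r=1$ and proved $u_{r}\geq c(1-|X|^{2})^{\alpha}$ in $B_{1}$ for $u_{r}(X)=r^{-\alpha}u(rX)$. This gives $u(0)=r^{\alpha}u_{r}(0)\geq c\,r^{\alpha}$, and the quantity $r^{2\alpha}$ never appears. Even if you work directly in $B_{r}$, the subsolution condition for $c(r^{2}-|X|^{2})^{\alpha}$ reads $C(n,\gamma)\,c\,r^{2}\leq\lambda c^{-\gamma}$. This forces $c\approx r^{-2/(1+\gamma)}=r^{-\alpha}$, so $w(0)=c\,r^{2\alpha}\approx r^{\alpha}$, which is exactly the right exponent.

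Your computation $-\Delta w\leq C(n,\gamma)\,c\,s^{\alpha-2}$ with $\lambda w^{-\gamma}=\lambda c^{-\gamma}s^{\alpha-2}$ is valid for every $\alpha\in(0,2)$. So the explicit barrier together with your interior-maximum argument is a self-contained proof, with the condition $C(n,\gamma)c^{1+\gamma}\leq\lambda$. It needs no existence theory for the singular Dirichlet problem. The detours through $(r^{2}-|X|^{2})^{\alpha/2}$ and the truncated barrier are dead ends and should be removed.

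Two small points:
\begin{enumerate}
\item Positivity of $u$ in $B_{r}$ is automatic: $u$ is nonnegative and superharmonic, and $u\equiv0$ cannot satisfy the inequality. So your remark about zeros of $u$ is unnecessary.
\item Since $u$ need not be continuous up to $\partial B_{r}$, state the boundary comparison as $\limsup_{X\to\partial B_{r}}(w-u)\leq0$. This holds because $w\to0$ and $u\geq0$, and it guarantees that a positive supremum of $w-u$ is attained at an interior point.
\end{enumerate}
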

Next we provide a simplified version of \cite[Theorem 1.2]{GLZ25}. 
\begin{lemma}[Growth rate estimates]\label{lem. growth rate near the boundary}
     Assume that $u$ satisfies
    \begin{equation}\label{eq. Dirichlet problem in B_2}
    \left\{\begin{aligned}
        &-\Delta u=f(X) u^{-\gamma}&\mbox{ in }&Cone_{\Sigma}\cap B_{2},\\
        &u=0&\mbox{ on }&\partial Cone_{\Sigma}\cap B_{2}.
    \end{aligned}
    \right.
\end{equation}where ${Cone_{\Sigma}}$ denotes a Lipschitz epigraphical cone with norm $L$, $\lambda\leq f(X)\leq \Lambda,$ then there exists $C=C(n,L,\gamma,\lambda,\Lambda,\phi)>0$ such that the following estimates hold:
    \begin{itemize}
        \item[(a)] If $\alpha<\phi$, then
        \begin{equation*}
            u(X)\leq C\|u\|_{L^{\infty}(Cone_{\Sigma}\cap B_2)}\cdot|X|^{\alpha},\ \forall X\in Cone_{\Sigma}\cap B_1 ;
        \end{equation*}
        \item[(b)] If  $\alpha>\phi$, then
        \begin{equation*}
            u(t\vec{e_{n}})\geq \frac{1}{C} t^{\phi}\mbox{ for }t\in[0,1].
        \end{equation*}
        \item[(c)] If  $\alpha=\phi$, then
        \begin{equation*}
            u(t\vec{e_{n}})\geq \frac{1}{C} t^{\phi}(\ln{\frac{1}{t}})^{\phi/2}\mbox{ for }t\in[0,1].
        \end{equation*}
    \end{itemize}
\end{lemma}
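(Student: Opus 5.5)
The statement is a simplified version of \cite[Theorem 1.2]{GLZ25}, so it could simply be quoted. To reprove it, I would use comparison with explicit homogeneous barriers together with the boundary Harnack principle. Comparison is available throughout because $s\mapsto s^{-\gamma}$ is decreasing: if $-\Delta w\geq f w^{-\gamma}$, $-\Delta v\leq fv^{-\gamma}$ in a bounded domain and $w\geq v$ on its boundary, then $w\geq v$ inside. This is the standard argument at a positive maximum of $v-w$, where $v^{-\gamma}<w^{-\gamma}$.

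\emph{Part (a).} I pick a slightly larger Lipschitz cone $Cone_{\Sigma'}\supset\overline{Cone_{\Sigma}}\setminus\{0\}$ whose frequency $\phi'$ still satisfies $\alpha<\phi'<\phi$; this is possible by continuity of $\lambda_{\Sigma}$ under small enlargement of $\Sigma$. Let $E'$ be its first eigenfunction. Then $E'\geq c_0>0$ on $\overline\Sigma$, and with $\alpha-2=-\alpha\gamma$,
\[
-\Delta (r^{\alpha}E')=\bigl(\lambda_{\Sigma'}-\alpha(\alpha+n-2)\bigr)r^{-\alpha\gamma}E'.
\]
The coefficient is positive because $\alpha<\phi'$. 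Hence $w=Mr^{\alpha}E'$ is a supersolution in $Cone_{\Sigma}$ once $M^{1+\gamma}c_0^{1+\gamma}(\lambda_{\Sigma'}-\alpha(\alpha+n-2))\geq\Lambda$. Adding the nonnegative harmonic term $\|u\|_{L^\infty}\,(r/c)^{\phi'}E'(\theta)/c_0$ makes the barrier dominate $u$ on $\partial B_2\cap Cone_{\Sigma}$. Comparison then gives $u\leq Mr^{\alpha}E'+C\|u\|r^{\phi'}$ in $B_2$, and both terms are $\lesssim|X|^{\alpha}$ in $B_1$ since $\phi'>\alpha$. Lemma~\ref{lem. non degenerate} at an interior point gives $\|u\|_{L^\infty}\geq c>0$, so the constant $M$ can be absorbed into $C\|u\|_{L^\infty}$.

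\emph{Part (b).} Here $u$ is superharmonic and positive. Let $h$ be harmonic in $Cone_{\Sigma}\cap B_{3/2}$, vanishing on $\partial Cone_\Sigma$, and equal to $u$ on a compact piece of $\partial B_{3/2}$ away from $\Gamma$ (cut off to zero near $\Gamma$). Then $u\geq h$ by the maximum principle. By Lemma~\ref{lem. non degenerate} applied in a ball around $\vec e_n$, $u\geq c$ on that piece. By the boundary Harnack principle in the Lipschitz cone, comparing $h$ with $H$, we get $h(t\vec e_n)\geq c\,t^{\phi}$. This actually holds for every $\gamma$, which settles (b).

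\emph{Part (c), the main obstacle.} When $\alpha=\phi$, set $v_k=2^{k\phi}u(2^{-k}\vec e_n)$ and prove a dyadic recursion
\[
v_{k+1}\geq v_k+c\,v_k^{-\gamma}.
\]
At scale $2^{-k}$, split $u=h_k+g_k$. Here $h_k$ is the harmonic replacement in $Cone_\Sigma\cap B_{2^{-k}}$, which by boundary Harnack satisfies $h_k(2^{-k-1}\vec e_n)\geq 2^{-\phi}v_k 2^{-k\phi}(1-o(1))$. The term $g_k\geq0$ solves $-\Delta g_k=fu^{-\gamma}$ with zero data. On the suspended region near $2^{-k-1}\vec e_n$ one has $u\lesssim 2^{-k\phi}v_k$ by interior Harnack, hence $fu^{-\gamma}\gtrsim(2^{-k\phi}v_k)^{-\gamma}$. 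The Green function then gives $g_k(2^{-k-1}\vec e_n)\gtrsim 2^{-2k}(2^{-k\phi}v_k)^{-\gamma}$. Since $2=\phi(1+\gamma)$, this is exactly $2^{-k\phi}v_k^{-\gamma}$ after normalization.

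The delicate point is getting the harmonic part with constant exactly $1$ rather than $1-O(1)$. I would handle this by working with the ratio $u/H$ and its infimum over each dyadic shell, using the superharmonicity of $u$ against the homogeneous $H$. Then the minimum of $u/H$ on shells is nondecreasing inward, with the Green increment added on top. Summing the recursion gives $v_k^{1+\gamma}\gtrsim k$, i.e.
\[
u(t\vec e_n)\gtrsim t^{\phi}(\ln\tfrac1t)^{1/(1+\gamma)}=t^{\phi}(\ln\tfrac1t)^{\phi/2}.
\]
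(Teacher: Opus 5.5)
Your proposal takes a genuinely different route. The paper gives no argument for this lemma; it imports it from \cite[Theorem 1.2]{GLZ25}, a local estimate for general Lipschitz domains. You instead prove it directly in the cone.

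Parts (a) and (b) are correct as written. In (a), the enlarged cone $Cone_{\Sigma'}$ with $\alpha<\phi'<\phi$ is exactly what is needed. It gives $-\Delta(r^{\alpha}E')=(\lambda_{\Sigma'}-\alpha(\alpha+n-2))r^{-\alpha\gamma}E'$ with a positive coefficient. The bound $E'\geq c_0$ on $\overline{\Sigma}$ lets $Mr^{\alpha}E'+Ar^{\phi'}E'$ dominate $u$ up to the corners $\partial B_2\cap\Gamma$. The absorption of $M$ into $C\|u\|_{L^\infty}$ via Lemma~\ref{lem. non degenerate} is also fine. In (b) you only use that $u$ is positive, superharmonic, and bounded below away from $\Gamma$ by Lemma~\ref{lem. non degenerate}, together with Lemma~\ref{lem. classical boundary Harnack}. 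That indeed works for every $\gamma$.

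In (c), your first recursion (harmonic replacement $h_k$, value $v_k$ at one point) cannot close, as you note. The fix is right but has to be made precise in two places.

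Put $m_k=\inf_{\partial B_{2^{-k}}\cap Cone_{\Sigma}}u/H$. The minimum principle for the superharmonic function $u-m_kH$, which vanishes on $\Gamma$ and at the vertex, gives $u\geq m_kH$ in $B_{2^{-k}}\cap Cone_\Sigma$, so $m_k$ is nondecreasing in $k$.

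\emph{First missing step.} You need $v_k\lesssim m_k$, i.e.\ the size of $u/H$ on the suspended region must control its infimum over the whole sphere, including near $\Gamma$. Otherwise the increment $c\,v_k^{-\gamma}$ cannot be written in terms of $m_k$. This is your argument from (b) (harmonic minorant plus Kemper) run in grounded cylinders centred at points of $\Gamma\cap\partial B_{2^{-k}}$. Combine it with Harnack for $-\Delta u=fu^{-\gamma}$, whose right-hand side is $\lesssim 2^{k(2-\alpha)}$ at distance $\sim2^{-k}$ from $\Gamma$.

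\emph{Second missing step.} The Green increment must hold on the whole smaller sphere, not only at $2^{-k-1}\vec e_n$. Apply the same argument to the nonnegative superharmonic function $u-m_kH$ to get $u-m_kH\geq c\,m_k^{-\gamma}H$ in $B_{2^{-k-2}}\cap Cone_\Sigma$.

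Then $m_{k+2}\geq m_k+c\,m_k^{-\gamma}$. Using $(1+x)^{1+\gamma}\geq1+(1+\gamma)x$ gives $m_k^{1+\gamma}\gtrsim k$, and $v_k\gtrsim m_k$ yields the claim.

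As for what each route buys: the citation gives the estimate in its local Lipschitz-domain form, with constants depending only on $(n,L,\gamma,\lambda,\Lambda,\phi)$. Your proof is elementary and self-contained, and it makes the threshold transparent: the sign of $\lambda_{\Sigma'}-\alpha(\alpha+n-2)$ in (a), and the exact scale invariance of $u/H$ when $\alpha=\phi$ in (c). However, its constants depend on $\Sigma$ itself, through $\Sigma'$, $c_0$ and harmonic measures. That is harmless in this paper, where the cone is fixed.
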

Similar to the proof of  \cite[Corollary 1.3]{GLZ25}, we have the following boundary H\"older regularity.
\begin{lemma}[Boundary H\"older regularity]\label{thm.bdy holder}
Under the same assumptions as in Lemma \ref{lem. growth rate near the boundary}, there exist $\mu=\mu(n,\gamma, L)>0$
and $C=C(n,L,\gamma,\lambda,\Lambda)>0$ such that
\begin{equation*}
    \|u\|_{C^{\mu}(Cone_{\Sigma}\cap B_1)}\leq C\|u\|_{L^{\infty}(Cone_{\Sigma}\cap B_2)}.
\end{equation*}
\end{lemma}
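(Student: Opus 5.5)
The strategy is to follow the proof of \cite[Corollary 1.3]{GLZ25}: combine the boundary growth estimate with interior regularity, using a dyadic decomposition of $Cone_{\Sigma}\cap B_1$ by distance to $\Gamma$. By scaling $u\mapsto u/M$ with $M=\|u\|_{L^\infty(Cone_\Sigma\cap B_2)}$ we cannot keep the equation invariant, since $u^{-\gamma}$ is not homogeneous of degree one. Instead we use the natural rescaling $u_r(X)=r^{-\alpha}u(rX)$, which preserves $-\Delta u=f u^{-\gamma}$ (with $f$ replaced by $f(r\cdot)$, still in $[\lambda,\Lambda]$). Since the cone is scale invariant, it is also preserved.

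\textbf{Step 1: boundary decay at every boundary point.} For $Z\in\Gamma\cap B_{3/2}$, $\Gamma$ is a Lipschitz graph with constant $L$ near $Z$, though not a cone with vertex $Z$. I would prove a uniform decay
\[
\sup_{Cone_\Sigma\cap B_\rho(Z)}u\leq C(M+1)\rho^{\mu}
\]
for some small $\mu=\mu(n,\gamma,L)$. The upper bound comes from comparison with a barrier: by the Lipschitz exterior cone condition there is a positive harmonic-type function $w_Z\sim|X-Z|^{\beta}$, $\beta=\beta(n,L)>0$, in the complement of the exterior cone at $Z$. For $\mu<\min(\beta,\alpha)$, the function $A|X-Z|^{\mu}$ with $\mu<\beta$ is a strict supersolution of the form $-\Delta v\geq c|X-Z|^{\mu-2}$, and this dominates $\Lambda v^{-\gamma}\sim|X-Z|^{-\gamma\mu}$ when $\mu-2<-\gamma\mu$, i.e. $\mu<\alpha$, after choosing $A$ large depending on $M$ and the constants. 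Comparison in $Cone_\Sigma\cap B_{1/2}(Z)$ then gives the decay; the comparison principle holds because the nonlinearity $u^{-\gamma}$ is decreasing in $u$. This is essentially the proof of Lemma~\ref{lem. growth rate near the boundary}(a) with vertex-independent exponent, and I expect it to be the main obstacle: the supersolution must be built carefully near a Lipschitz boundary, with constants independent of $\lambda$ appearing only in $C$.

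\textbf{Step 2: interior oscillation estimate.} For $X\in Cone_\Sigma\cap B_1$, set $d=d(X)=\operatorname{dist}(X,\Gamma)$ and let $Z\in\Gamma$ be a closest point. On $B_{d/2}(X)$, Step 1 gives $u\leq C(M+1)d^{\mu}$. Lemma~\ref{lem. non degenerate} gives $u\geq c d^{\alpha}$ on $B_{d/4}(X)$, so $|\Delta u|\leq\Lambda c^{-\gamma}d^{-\gamma\alpha}$ there. Standard interior gradient estimates then give
\[
|\nabla u|\leq C\big(d^{-1}\sup_{B_{d/2}(X)}u+d\,\sup|\Delta u|\big)\leq C(M+1)d^{\mu-1}+Cd^{1-\gamma\alpha}.
\]
Since $1-\gamma\alpha=\alpha-1\geq\mu-1$, both terms are bounded by $C(M+1)d^{\mu-1}$.

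\textbf{Step 3: patching.} For $X,Y\in Cone_\Sigma\cap B_1$, if $|X-Y|<d(X)/4$, integrate the gradient bound to get $|u(X)-u(Y)|\leq C(M+1)d^{\mu-1}|X-Y|\leq C(M+1)|X-Y|^{\mu}$. Otherwise $d(X),d(Y)\lesssim|X-Y|$, and Step 1 bounds both $u(X)$ and $u(Y)$ by $C(M+1)|X-Y|^{\mu}$.

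Finally, to obtain the homogeneous form $C\,M$ rather than $C(M+1)$, note that $M\geq c$ by Lemma~\ref{lem. non degenerate} applied at $\vec e_n$. Hence $M+1\leq C M$, which closes the estimate.
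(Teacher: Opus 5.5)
Your overall scheme is the natural way to carry out what the paper only indicates by citing the proof of \cite[Corollary~1.3]{GLZ25}. It has three parts: a uniform decay bound at \emph{every} boundary point, not only the vertex; interior gradient bounds coming from the pointwise lower bound; and a two-case patching. Your exponent bookkeeping is right: $1-\gamma\alpha=\alpha-1\geq\mu-1$, $\mu<\min\{\alpha,\beta\}$, and $M\geq c(n,L,\gamma,\lambda)$ turns $C(M+1)$ into $CM$. The one step that fails as written is the barrier in Step~1, which you yourself flagged as the crux. Since $\Delta|X-Z|^{\mu}=\mu(\mu+n-2)|X-Z|^{\mu-2}>0$ for $\mu>0$ and $n\geq2$, the function $A|X-Z|^{\mu}$ is subharmonic. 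It cannot be a supersolution, and no choice of $A$ changes that.

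The fix is to build the barrier from $w_Z$ itself. Take the exterior cone $\{(Z-X)_n>(2L+1)|(X-Z)'|\}$, which is strictly narrower than the one given by the Lipschitz bound. Let $w_Z=|X-Z|^{\beta}\Theta$ be the positive homogeneous harmonic function in its complement, and set $v=w_Z^{s}$ with $s=\mu/\beta\in(0,1)$. Because $|\nabla w_Z|\geq|\partial_{r}w_Z|=\beta w_Z/|X-Z|$, you get
\[
-\Delta v=s(1-s)\,w_Z^{s-2}|\nabla w_Z|^{2}\geq s(1-s)\beta^{2}\,\frac{v}{|X-Z|^{2}}.
\]
The set $\overline{Cone_{\Sigma}}\subseteq\{(X-Z)_n\geq-L|(X-Z)'|\}$ stays a fixed angle away from this narrower cone. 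Hence $\Theta\geq c(n,L)>0$ in all relevant directions, and $v$ is comparable to $|X-Z|^{\mu}$ on $Cone_{\Sigma}\cap B_{1/2}(Z)$. It follows that $-\Delta(Av)\geq cA|X-Z|^{\mu-2}\geq\Lambda(Av)^{-\gamma}$ once $A^{1+\gamma}\geq C\Lambda$, precisely because $\mu(1+\gamma)<2$. Taking also $A\geq CM$ on the spherical part of the boundary, your comparison argument then goes through.

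A smaller point concerns Step~2, where the nearest point $Z$ must lie where Step~1 applies. The triangle inequality only gives $|Z|\leq2|X|$. Instead use that $\Gamma$ is dilation invariant, so $\langle X-Z,Z\rangle=0$ and hence $|Z|\leq|X|<1$.
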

In \cite{K72}, Kemper developed the boundary Harnack principle, which states as follows:
\begin{lemma}\label{lem. classical boundary Harnack}
    Let $\Gamma=\{x_{n}=g(x')\}$ be a Lipschitz graph passing through the origin. Let $u,v>0$ be two harmonic functions defined in $\mathcal{GC}_{1}$ such that $u=v=0$ on $\Gamma$. Then there exist constants $C,\epsilon$ depending only on $(n,\|g\|_{C^{0,1}})$ such that
    \begin{equation*}
        \|\frac{u}{v}\|_{C^{\epsilon}(\mathcal{GC}_{1/2})}\leq C\frac{u}{v}(\frac{\vec{e}_{n}}{2}).
    \end{equation*}
\end{lemma}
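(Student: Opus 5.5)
The plan is the classical Caffarelli--Fabes--Mortola--Salsa route to Kemper's theorem. Throughout, $A_r(Y)=Y+\frac r2\vec e_n$ denotes the "corkscrew point" above a boundary point $Y\in\Gamma$. All constants below depend only on $(n,L)$ with $L=\|g\|_{C^{0,1}}$, and we normalize $u(\frac{\vec e_n}{2})=v(\frac{\vec e_n}{2})=1$.

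\emph{Step 1: interior Harnack chains.} Since $\Gamma$ is a Lipschitz graph, every point $X\in\mathcal{GC}_{3/4}$ at height $d=x_n-g(x')$ above $\Gamma$ can be joined to $\frac{\vec e_n}{2}$ by a chain of $O(\log\frac1d)$ balls of comparable size. Each ball stays a fixed fraction of its radius away from $\Gamma$. Harnack's inequality along such a chain gives
\begin{equation*}
C^{-1}d^{M}\le u(X)\le C d^{-M},
\end{equation*}
and it shows that $u$ is comparable at corkscrew points of comparable scale.

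\emph{Step 2: boundary H\"older decay and the Carleson estimate.} The complement of $Cone$ satisfies a uniform exterior cone condition. Hence a positive harmonic function vanishing on $\Gamma\cap\mathcal{GC}_{2r}(Y)$ satisfies
\begin{equation*}
\sup_{\mathcal{GC}_{r}(Y)} u\le \theta\sup_{\mathcal{GC}_{2r}(Y)} u,\qquad \theta<1.
\end{equation*}
Combining this decay with Step 1 by the standard contradiction/iteration argument (a point of very large value forces, via the decay, a sequence of points with geometrically growing values approaching $\Gamma$, which contradicts the polynomial bound of Step 1) yields the Carleson estimate
\begin{equation*}
\sup_{\mathcal{GC}_{r}(Y)}u\le C\,u(A_r(Y)).
\end{equation*}

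\emph{Step 3: comparability $u/v\approx u(A)/v(A)$.} This is the main obstacle. I would compare both $u$ and $v$ with the harmonic measure $\omega^X$ of the boundary portion $\partial\mathcal{GC}_{r}\setminus\Gamma$ in $\mathcal{GC}_{r}$ (or, alternatively, with the Green function with pole at the corkscrew point).

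The upper bound $u\le C u(A_r)\,\omega^X$ follows from the maximum principle together with the Carleson estimate. Near the bottom of the lateral sides I would handle it with an auxiliary cylinder, where the Lipschitz condition is used again.

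The lower bound $v\ge c\,v(A_r)\,\omega^X$ requires showing that $\omega^X$ is carried, up to a constant, by the "top" part of $\partial\mathcal{GC}_{r}$, where Harnack applies. This is the key estimate. I would prove it by the Dahlberg-type comparison of $\omega^X$ of the top with $\omega^X$ of the lateral part, using Step 2 once more, and expect this to be the delicate point.

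Together these give
\begin{equation*}
C^{-1}\frac{u(A_r)}{v(A_r)}\le \frac uv\le C\frac{u(A_r)}{v(A_r)}\quad\mbox{in }\mathcal{GC}_{r/2}(Y).
\end{equation*}

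\emph{Step 4: oscillation decay.} Set $m_k=\inf u/v$ and $M_k=\sup u/v$ over $\mathcal{GC}_{2^{-k}}$. Both $u-m_kv$ and $M_kv-u$ are nonnegative harmonic functions vanishing on $\Gamma$, so Step 3 applies to each of them. Adding the two resulting lower bounds at the corkscrew point gives
\begin{equation*}
M_{k+1}-m_{k+1}\le(1-c)(M_k-m_k).
\end{equation*}
Since $M_0\le C$ by Step 3, iterating yields the Hölder modulus at boundary points. Interior Schauder/Harnack estimates then give it away from $\Gamma$. Undoing the normalization gives the stated bound $\|u/v\|_{C^\epsilon(\mathcal{GC}_{1/2})}\le C\frac uv(\frac{\vec e_n}{2})$.
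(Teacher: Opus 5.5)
The paper gives no proof of this lemma; it simply quotes it from Kemper \cite{K72}. Your outline follows the standard Caffarelli--Fabes--Mortola--Salsa scheme, and Steps 1, 2 and 4 are fine as sketched. In Step 4, start the iteration on a cylinder compactly inside $\mathcal{GC}_1$, say $\mathcal{GC}_{3/4}(Y)$, so that the bound on $M_0$ really comes from Step 3.

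The gap is Step 3, and it is not minor. Write $\mathrm{lat}$ and $\mathrm{top}$ for the lateral and top parts of $\partial\mathcal{GC}_r$. The estimate $\omega^X(\mathrm{lat})\le C\,\omega^X(\mathrm{top})$ for $X\in\mathcal{GC}_{r/2}$ is where all the difficulty of Kemper's theorem lies, and you only announce it. Moreover, the tool you point to (``Step 2 once more'') cannot produce it. The maps $X\mapsto\omega^X(\mathrm{lat})$ and $X\mapsto\omega^X(\mathrm{top})$ are themselves two positive harmonic functions in $\mathcal{GC}_r$ vanishing on $\Gamma$, so the desired inequality is already an instance of the boundary Harnack principle. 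Step 2 gives only $\omega^X(\mathrm{lat})\le C(d/r)^{\beta}$ with a small $\beta$. Step 1 gives only $\omega^X(\mathrm{top})\ge c(d/r)^{M}$ with a large $M$. The Carleson estimate bounds each function by its own corkscrew value, never one by the other.

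You need an ingredient that breaks this circularity. The classical one is the Green function, which you mention only as an alternative. Work in the Lipschitz box $D=\mathcal{GC}_r$. For $Q\in\partial D$, $\rho=cr$, an interior corkscrew point $A_\rho(Q)\in D$, and $X\in D\setminus B_{4\rho}(Q)$, one proves $\omega_D^X(\Delta_\rho(Q))\approx\rho^{n-2}G_D(X,A_\rho(Q))$.
\begin{itemize}
\item The lower bound comes from the maximum principle outside a small ball around the pole.
\item The upper bound comes from $\omega_D^X(\Delta_\rho(Q))\le-\int_D G_D(X,Y)\Delta\varphi(Y)\,dY$ for a cutoff $\varphi$, combined with the Carleson estimate for $G_D(X,\cdot)$ at $Q$. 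This requires Step 2 at the lateral and corner points of $D$, not only along $\Gamma$.
\end{itemize}
Then cover $\mathrm{lat}$ by $N(n,L)$ such surface balls. Compare each $G_D(X,A_\rho(Q_j))$ with $G_D(X,A_\rho(Q_{\mathrm{top}}))$ by Harnack chains in the pole variable; these chains stay far from $X$. This yields $\omega_D^X(\mathrm{lat})\le C\omega_D^X(\mathrm{top})$.

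A shorter fix is the De Silva--Savin positivity argument, which is the harmonic case of Lemma~\ref{lem. ensure positive} and has an elementary proof independent of boundary Harnack. Normalize $u(A_r)=v(A_r)=1$. The Carleson estimate gives $v\le C_0$ on $\mathcal{GC}_r$, and interior Harnack gives $u\ge c_0$ on $\mathcal{SC}_{r,\delta}$. Then $w=\frac{M+1}{c_0}u-\frac{v}{C_0}$ is harmonic, satisfies $w\ge-1$, $w\ge M$ on $\mathcal{SC}_{r,\delta}$, and $w=0$ on $\Gamma$. Hence $w\ge0$ on $\mathcal{GC}_{r/2}$, that is, $v\le Cu$ there. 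Exchanging $u$ and $v$ gives Step 3 with no harmonic-measure comparison at all. This is the same mechanism the paper uses for its nonlinear version in Lemma~\ref{lem. nonlinear BHP, if case 1 holds}.
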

\begin{remark}\label{rmk. boundary harnack remark}
    It could also be inferred from Lemma~\ref{lem. classical boundary Harnack} that $\displaystyle\max_{\mathcal{GC}_{1/2}}\frac{u}{v}\leq C\min_{\mathcal{GC}_{1/2}}\frac{u}{v}$.
\end{remark}

\subsection{Nonexistence}
 In this subsection, we prove the nonexistence of solution to \eqref{eq. main}  when $\alpha\geq\phi$.
\begin{proof}[Proof of Theorem \ref{thm. non-existing}]
We use the contradiction argument and suppose that $u$ is a global solution to \eqref{eq. main}.
By the well-posedness of \eqref{eq. SLEF} in a bounded Lipschitz region (see \cite[Theorem 1.1]{GLZ25}), we can define $v_R$ to be the solution to 
 \begin{equation*}
    \left\{\begin{aligned}
        &-\Delta v_R=\lambda v_R^{-\gamma}&\mbox{ in }&Cone_{\Sigma}\cap B_{R},\\
        &v_R=0&\mbox{ on }&\partial (Cone_{\Sigma}\cap B_{R}).
    \end{aligned}
    \right.
\end{equation*}  
Since $f(X)\geq \lambda$ in $Cone_{\Sigma}\cap B_{R}$ and $u\geq 0$ on $\partial(Cone_{\Sigma}\cap B_{R})$, then by the maximum principles in \cite[Lemma 2.1]{GLZ25}, it follows that
\[u\geq v_R \ \mbox{in}\ Cone_{\Sigma}\cap B_{R}.\]
Furthermore,  by the scaling-invariance property   $v_R(X)=R^{\alpha}\cdot v_1(\frac{X}{R})$, we have 
 \[u(\vec{e_n})\geq v_R(\vec{e_n})\geq R^{\alpha}\cdot v_1(\frac{\vec{e_n}}{R})\ \mbox{for all}\ R>1.\]
Since $\alpha\geq\phi$,  it follows from (b) and (c) of Lemma \ref{lem. growth rate near the boundary} that the above lower bound tends to infinity as $R\to \infty,$ which is a contradiction. Therefore, we verify the nonexistence of global solutions to \eqref{eq. main} when $\alpha\geq \phi$.
\end{proof}

\section{Construction of global solutions}
In this section we construct global solutions to \eqref{eq. main} when $\alpha<\phi$ and thus prove Theorem~\ref{thm. construction}.
\subsection{The ground solution \texorpdfstring{$\Psi_{0}$}{Lg}}
In this part, we construct the solution $\Psi_{0}$ in Theorem~\ref{thm. construction} (1) and prove that all global solutions to \eqref{eq. main} are no less than $\Psi_{0}$.
\begin{proof}[Proof of Theorem~\ref{thm. construction} (1)]
For any $R>0$, let $\Psi_{0,R}\geq 0$ be the solution of 
     \begin{equation}\label{eq. Def Psi_0,R}
    \left\{\begin{aligned}
        &-\Delta \Psi_{0,R}=f(X)\cdot \Psi_{0,R}^{-\gamma}&\mbox{ in }&Cone_{\Sigma}\cap B_{R},\\
        &\Psi_{0,R}=0&\mbox{ on }&\partial (Cone_{\Sigma}\cap B_{R}).
    \end{aligned}
    \right.
\end{equation} 
The existence of $\Psi_{0,R}$ is guaranteed by \cite[Theorem 1.1 \& Remark 1.3]{GLZ25}. Clearly, $\Psi_{0,R}$ (extended trivially outside $Cone_{\Sigma}\cap B_{R}$) is an increasing sequence in $R$. Let $V_R$ be the solution of   \eqref{eq. Def Psi_0,R} with $f$ being replaced by $\Lambda,$ then 
\[V_R\geq  \Psi_{0,R} \mbox{ in } Cone_{\Sigma}\cap B_{R}.\]
For every fixed $r$ and $R\geq 2r,$ it follows from the scaling-invariance property $V_R(X)=R^{\alpha}V_1(\frac{X}{R})$ that
\begin{equation*}
    \|\Psi_{0,R}\|_{L^{\infty}(Cone_{\Sigma}\cap B_{r})}\leq R^{\alpha}\|V_{1}\|_{L^{\infty}(Cone_{\Sigma}\cap B_{r/R})}.
\end{equation*}
Applying Lemma~\ref{lem. growth rate near the boundary} (a) to $V_1$ gives that
\begin{equation}\label{psi infty}
    \|\Psi_{0,R}\|_{L^{\infty}(Cone_{\Sigma}\cap B_{r})}\leq C r^{\alpha}\|V_{1}\|_{L^{\infty}(Cone_{\Sigma}\cap B_{1})}=:Cr^\alpha.
\end{equation}
Then, by the monotone convergence theorem, we naturally define 
\[\Psi_{0}(X):=\lim_{R\to\infty}\Psi_{0,R}(X) \ \mbox {for}\ X\in Cone_{\Sigma}.\]
Obviously, one has $\Psi_0(X)\leq C|X|^{\alpha}.$

It remains to show  that $\Psi_0$ is a global classical solution to \eqref{eq. main}.
Using Lemma \ref{thm.bdy holder} to $\Psi_{0,R}$ and \eqref{psi infty}, there exists a constant $\mu>0$ such that
\begin{equation}\label{psi holder}
    \|\Psi_{0,R}\|_{C^{\mu}(Cone_{\Sigma}\cap B_{r/2})} \leq C r^{-\mu}\|\Psi_{0,R}\|_{L^{\infty}(Cone_{\Sigma}\cap B_{r})}\leq C r^{\alpha-\mu}.
\end{equation}
Hence, $\Psi_0$ is locally  H\"older continuous and $$\Psi_0=0\  \mbox{on}\  \partial Cone_\Sigma.$$ 
Furthermore, it follows from Lemma~\ref{lem. non degenerate}, \eqref{psi holder}, and the assumption $f(X)\in C^{Dini}_{loc}$ that
\begin{equation*}
    \|\Psi_{0,R}\|_{C^{2}_{\sigma}(E)}\leq C(E),\quad\mbox{for every }E\subset\subset Cone_\Sigma\mbox{ and sufficiently large }R.
\end{equation*}
Here, $C^{2}_{\sigma}$ means that the second derivative of $\Psi_{0,R}$ is equicontinuous with respect to a certain modulus-of-continuity $\sigma$, i.e.
\begin{equation*}
    \Big|D^{2}\Psi_{0,R}(x)-D^{2}\Psi_{0,R}(y)\Big|\leq C(E)\cdot\sigma(|x-y|),\quad\mbox{for all }x,y\in E.
\end{equation*}
By letting $R\to\infty$, we conclude that $\Psi_0$ is a classical solution of \eqref{eq. main} and complete the proof  of Theorem \ref{thm. construction}(1).
\end{proof}

A by-product is that any global solution is greater than $\Psi_{0}$.
\begin{lemma}\label{lem. psi 0 is the lower bound}
    Let $\alpha<\phi$ and let $u$ be a global solution to \eqref{eq. main}. Then $u\geq\Psi_{0}$ everywhere.
\end{lemma}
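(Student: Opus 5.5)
The plan is a comparison argument on truncated cones, followed by passage to the limit $R\to\infty$. Fix $R>0$ and compare $u$ with the approximating solution $\Psi_{0,R}$ from the proof of Theorem~\ref{thm. construction}(1), which solves \eqref{eq. Def Psi_0,R} in $Cone_{\Sigma}\cap B_{R}$ with zero boundary data.

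On the boundary $\partial(Cone_{\Sigma}\cap B_{R})$ we have $\Psi_{0,R}=0$. The solution $u$ satisfies $u=0$ on $\partial Cone_{\Sigma}$ and $u>0$ in $Cone_{\Sigma}$, so $u\geq0=\Psi_{0,R}$ on the whole boundary of the truncated domain. Both functions solve the same equation $-\Delta w=f(X)w^{-\gamma}$ there.

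The nonlinearity $w\mapsto f(X)w^{-\gamma}$ is decreasing in $w$, so the maximum principle for this singular equation applies. This is exactly the comparison principle \cite[Lemma 2.1]{GLZ25} already invoked in the proof of Theorem~\ref{thm. non-existing}. It gives $u\geq\Psi_{0,R}$ in $Cone_{\Sigma}\cap B_{R}$.

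If one wants to see the mechanism, argue as follows.
\begin{itemize}
\item Suppose $w=\Psi_{0,R}-u$ has a positive maximum in $Cone_{\Sigma}\cap B_{R}$.
\item On the open set $\{w>0\}$ we have $\Psi_{0,R}>u>0$, hence $\Delta w=f\,(u^{-\gamma}-\Psi_{0,R}^{-\gamma})\geq0$.
\item So $w$ is subharmonic there and vanishes on the boundary of this set, which contradicts $w$ having a positive maximum.
\end{itemize}

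The only delicate point is continuity up to the boundary, where the right-hand side blows up. Near a boundary point both functions tend to $0$, and $\Psi_{0,R}$ is continuous up to the boundary by Lemma~\ref{thm.bdy holder}. Hence $\limsup w\leq0$ at boundary points, which is what the maximum principle needs. To be safe, one can run the argument on the set $\{w>\varepsilon\}$, which stays away from $\partial(Cone_{\Sigma}\cap B_R)$, and then send $\varepsilon\to0$.

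Finally, fix $X\in Cone_{\Sigma}$ and let $R\to\infty$. Since $\Psi_{0,R}(X)\nearrow\Psi_{0}(X)$ by definition, we get $u(X)\geq\Psi_{0}(X)$. On $\partial Cone_{\Sigma}$ both functions vanish, so the inequality holds everywhere.

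I expect no real obstacle. The only step needing care is the boundary behaviour in the comparison, and the $\varepsilon$-sublevel argument above handles it.
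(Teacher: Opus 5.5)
Your proposal is correct and is the same argument as the paper's: compare $u$ with $\Psi_{0,R}$ on $Cone_{\Sigma}\cap B_{R}$ using the maximum principle for the decreasing nonlinearity $f(X)w^{-\gamma}$ (the paper simply cites \cite[Lemma 2.1]{GLZ25}), then pass to the monotone limit $\Psi_{0,R}\nearrow\Psi_{0}$ as $R\to\infty$. Your sketch of the comparison mechanism is fine, with one small citation slip: continuity of $\Psi_{0,R}$ up to the spherical part $\partial B_{R}\cap Cone_{\Sigma}$ comes from the bounded-domain well-posedness in \cite{GLZ25}, not from Lemma~\ref{thm.bdy holder}, which only controls behaviour near $\partial Cone_{\Sigma}$.
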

\begin{proof}
    As $\Psi_{0,R}$ vanishes (then less than $u$) on $\partial(B_{R}\cap Cone_{\Sigma})$, it is then less than $u$ inside $B_{R}\cap Cone_{\Sigma}$ by the maximum principle. Therefore,
    \begin{equation*}
u(X)\geq\lim_{R\to\infty}\Psi_{0,R}(X)=\Psi_{0}(X).
    \end{equation*}
    for any fixed $X\in Cone_{\Sigma}$.
\end{proof}
\subsection{Solutions with ``asymptotic slope" \texorpdfstring{$K$}{Lg}}
In this part, we construct $\Psi_{K}$ for all $K>0$.
\begin{proof}[Proof of Theorem \ref{thm. construction} (2)]
Again, by \cite[Theorem 1.1 \& Remark 1.3]{GLZ25}, we define a sequence of functions $\Psi_{K,R}(X)$ satisfying
    \begin{equation}\label{eq. slope solution in bounded interval}
        \left\{\begin{aligned}
            &-\Delta\Psi_{K,R}(X)=\frac{f(X)}{\Psi_{K,R}(X)^{\gamma}}&\mbox{in }&Cone_{\Sigma}\cap B_R,\\
            &\Psi_{K,R}(X)=\Psi_{0}(X)+K\cdot H(X)&\mbox{on }&\partial(Cone_{\Sigma}\cap B_{R}).
        \end{aligned}\right.
    \end{equation}
    We further define $\Psi_{K,R}=\Psi_{0}(X)+K\cdot H(X)$ in $B_{R}^{c}\cap Cone_{\Sigma}$, so that $\Psi_{K,R}$ becomes a continuous function in $Cone_{\Sigma}$.
    
    Since $K\cdot H(X)$ is harmonic in $Cone_\Sigma$, it is a sub-solution to \eqref{eq. slope solution in bounded interval}. Since
    \begin{equation*}
        -\Delta(\Psi_{0}(X)+K\cdot H(X))=\frac{f(X)}{\Psi_{0}(X)^{\gamma}}\geq\frac{f(X)}{(\Psi_{0}(t)+KH(X))^{\gamma}},
    \end{equation*}
    we see $\Psi_{0}(X)+KH(X)$ is a super-solution to \eqref{eq. slope solution in bounded interval}. In conclusion, we have 
    \begin{equation}\label{eq. bounded slope solution also bounded}
        KH(X)\leq \Psi_{K,R}(X)\leq KH(X)+\Psi_{0}(X)\  \mbox{for}\ X\in Cone_{\Sigma}.
    \end{equation}
    
   We next show that $\Psi_{K,R}(X)$ is decreasing in $R$. In fact, we arbitrarily choose $R_{1}<R_{2}$, then since $\Psi_{K,R_{2}}(X)\leq K\cdot H(X)+\Psi_{0}(X)$, we see that $\Psi_{K,R_{1}}(X)$ and $\Psi_{K,R_{2}}(X)$ satisfies the same equation in $Cone_{\Sigma}\cap B_{R_1}$, but the boundary condition of $\Psi_{K,R_{1}}(X)$ is larger than that of $\Psi_{K,R_{2}}(X)$. By the maximum principle, we have $\Psi_{K,R_{1}}(X)\geq \Psi_{K,R_{2}}(X)$ in $Cone_{\Sigma}\cap B_{R_1}$ (and also in $Cone_{\Sigma}$). 

     From the bound \eqref{eq. bounded slope solution also bounded} and the monotonicity of $\Psi_{K,R}(X)$ in $R$,  one can define 
    \begin{equation*}
         \Psi_{K}(X):=\lim_{R\to \infty}\Psi_{K,R}(X),\ \mbox{for}\  X\in Cone_{\Sigma},
    \end{equation*}
     which also satisfies the estimate \eqref{eq. bounded slope solution also bounded}. In particular,
     \begin{equation*}
         \lim_{X\to Y}\Psi_{K}(X)=0,\quad\mbox{for all }Y\in\partial Cone_{\Sigma}.
     \end{equation*}
     Arguing as in the construction of $\Psi_0$,  we derive that $\Psi_{K,R}$ is locally $C^{2}_{\sigma}$ with norms uniformly bounded in $R$. Consequently, $\Psi_{K}(X)$ is a classical global solution to \eqref{eq. main}.
\end{proof}

Finally, we give the proof of Theorem~\ref{thm. construction} (3). We remark that this implies that $\Psi_{K}(X)$ is continuous and increasing with respect to the parameter $K$ for every fixed $X\in Cone_{\Sigma}$.
\begin{proof}[Proof of Theorem~\ref{thm. construction} (3)]
    For every $R>0$, one has
    \begin{equation*}
        \left\{\begin{aligned}
            &\Psi_{K,R}-\Psi_{k,R}=(K-k)\cdot H&\mbox{ on }&\partial(B_{R}\cap Cone_{\Sigma}),&\mbox{if }&k>0,\\
            &\Psi_{K,R}-\Psi_{k}=(K-k)\cdot H&\mbox{ on }&\partial(B_{R}\cap Cone_{\Sigma}),&\mbox{if }&k=0.
        \end{aligned}\right.
    \end{equation*}
    Since $\Psi_{0}$, $\Psi_{k,R}$, $\Psi_{K,R}$ all satisfy \eqref{eq. SLEF} in $B_{R}\cap Cone_{\Sigma}$, it follows from \cite[Lemma 2.1]{GLZ25} that
    \begin{equation*}
        \left\{\begin{aligned}
            &\Psi_{K,R}\geq\Psi_{k,R}&\mbox{ in }&B_{R}\cap Cone_{\Sigma},&\mbox{if }&k>0,\\
            &\Psi_{K,R}\geq\Psi_{k}&\mbox{ in }&B_{R}\cap Cone_{\Sigma},&\mbox{if }&k=0.
        \end{aligned}\right.
    \end{equation*}
    Moreover, since $f(X)u^{-\gamma}$ is decreasing in $u$, the estimate above implies that
    \begin{equation*}
        \left\{\begin{aligned}
            &\Delta(\Psi_{K,R}-\Psi_{k,R})\geq0&\mbox{ in }&B_{R}\cap Cone_{\Sigma},&\mbox{if }&k>0,\\
            &\Delta(\Psi_{K,R}-\Psi_{k})\geq0&\mbox{ in }&B_{R}\cap Cone_{\Sigma},&\mbox{if }&k=0.
        \end{aligned}\right.
    \end{equation*}
    Since $(K-k)\cdot H(X)$ is harmonic, it follows that
    \begin{equation*}
        \left\{\begin{aligned}
            &0\leq\Psi_{K,R}-\Psi_{k,R}\leq(K-k)\cdot H(X)&\mbox{ in }&B_{R}\cap Cone_{\Sigma},&\mbox{if }&k>0,\\
            &0\leq\Psi_{K,R}-\Psi_{k}\leq(K-k)\cdot H(X)&\mbox{ in }&B_{R}\cap Cone_{\Sigma},&\mbox{if }&k=0.
        \end{aligned}\right.
    \end{equation*}
    Sending $R\to\infty$ yields the desired estimate of Theorem~\ref{thm. construction} (3).
\end{proof}

\section{Growth rate estimate}
The first key step in proving Theorem~\ref{thm. classification} is to estimate the growth rate of a global solution to \eqref{eq. main} at infinity.
\begin{lemma}\label{lem. growth rate at infinity}
    Assume that $\alpha<\phi$, $0<\lambda\leq f(X)\leq\Lambda$, and let $u$ be a global solution to \eqref{eq. main}. Then there exists a non-universal constant $C(u)$ such that
    \begin{equation*}
        \max_{B_{R}\cap Cone_{\Sigma}}u(X)\leq C(u)\cdot R^{\phi},\quad\mbox{for all }R\geq2.
    \end{equation*}
\end{lemma}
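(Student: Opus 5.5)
We follow the strategy outlined in the introduction: harmonic replacement, then Kemper's boundary Harnack principle. Fix $R\geq2$ and let $h_R$ be the harmonic function in $Cone_{\Sigma}\cap B_{R}$ with $h_R=u$ on $\partial(Cone_{\Sigma}\cap B_{R})$. In particular $h_R=0$ on $\Gamma\cap B_R$.

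\emph{Step 1: trapping $u$ between $h_R$ and $h_R+\Psi_0$.} Since $-\Delta u=f u^{-\gamma}\geq0$ and $u=h_R$ on the boundary, the maximum principle gives $h_R\leq u$. For the reverse inequality, note that $w:=h_R+\Psi_{0}$ satisfies
\[
-\Delta w=f\Psi_0^{-\gamma}\geq f w^{-\gamma}, \qquad w\geq u \mbox{ on } \partial(Cone_{\Sigma}\cap B_R),
\]
the latter because $\Psi_0\geq0$. So $w$ is a supersolution, and the comparison principle \cite[Lemma 2.1]{GLZ25} yields $u\leq h_R+\Psi_0$. Combined with Theorem~\ref{thm. construction}(1), this gives
\[
h_R\leq u\leq h_R+C|X|^{\alpha}\quad\mbox{in }Cone_{\Sigma}\cap B_R .
\]

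\emph{Step 2: comparing $h_R$ with $H$.} Both $h_R$ and $H$ are positive harmonic functions in $Cone_{\Sigma}\cap B_R$ vanishing on $\Gamma\cap B_R$. Rescale by $R$: the cone is invariant, and its Lipschitz norm is fixed. Near each boundary point of $\Gamma\cap B_{R/2}$ apply Lemma~\ref{lem. classical boundary Harnack} and Remark~\ref{rmk. boundary harnack remark} on grounded cylinders of size comparable to $R$. Away from $\Gamma$, in the compact part of $\overline{Cone_\Sigma}\cap\{|X|\leq R/2\}$ at distance $\gtrsim R$ from $\Gamma$, use the interior Harnack inequality. A finite (scale-invariant) chain of these estimates gives
\[
\max_{Cone_\Sigma\cap B_{R/2}}\frac{h_R}{H}\leq C_0\min_{Cone_\Sigma\cap B_{R/2}}\frac{h_R}{H},
\]
with $C_0=C_0(n,L)$ independent of $R$. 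The vertex $0\in\Gamma$ is harmless, since the cone is an epigraph and the cylinders $\mathcal{GC}_r(0)$ are admissible.

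\emph{Step 3: evaluating at a fixed interior point.} Take $X_0=\vec e_n$. By Step 1, $h_R(X_0)\leq u(X_0)$ for all $R\geq 2$, and $H(X_0)>0$ is fixed. Hence
\[
\max_{Cone_\Sigma\cap B_{R/2}} h_R\leq C_0\frac{u(X_0)}{H(X_0)}\max_{B_{R/2}}H\leq C_0\,\frac{u(X_0)}{H(X_0)}\,(R/2)^{\phi}.
\]
Then Step 1 gives
\[
\max_{B_{R/2}\cap Cone_\Sigma}u\leq C\,u(X_0)R^{\phi}+CR^{\alpha}\leq C(u)R^{\phi},
\]
using $\alpha<\phi$ and $R\geq2$. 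Replacing $R$ by $2R$ proves the lemma.

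The main obstacle is Step 2: making the boundary Harnack comparison uniform in $R$ on the whole of $B_{R/2}$. The difficulty is that $h_R$ has uncontrolled boundary data on the spherical cap $\partial B_R\cap Cone_\Sigma$. This is handled by scale invariance, since after dilation by $R$ everything takes place in a fixed Lipschitz domain, together with a covering of $\Gamma\cap B_{1/2}$ by cylinders and a Harnack chain to $X_0/R$. Each such cylinder $\mathcal{GC}_{c}(Y)$ must lie inside $B_1$ after rescaling, so that both $h_R$ and $H$ vanish on its bottom.
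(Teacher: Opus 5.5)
Your proposal is correct and follows essentially the same route as the paper: a harmonic replacement $h_R$ with the sandwich $h_R\le u\le h_R+\Psi_0$, a scale-invariant boundary Harnack comparison of $h_R$ with a multiple of $H$ on the half-size region, and control of that multiple by evaluating at $\vec e_n$, where $h_R(\vec e_n)\le u(\vec e_n)$. The differences are cosmetic (you work on $B_R$ and $B_{R/2}$ and then replace $R$ by $2R$, whereas the paper works on $B_{2R}$ and $B_R$), and your Step 2 explicitly justifies the uniformity in $R$ via dilation and a Harnack chain, which the paper only asserts.
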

\begin{proof}
    For every $R\geq2$, let $h_{R}$ be the harmonic replacement of $u$ in $B_{2R}\cap Cone_{\Sigma}$, i.e.
        \begin{equation*}
            \left\{\begin{aligned}
            &-\Delta h_{R}(X)=0&\mbox{ in }&B_{2R}\cap Cone_{\Sigma},\\
            &h_{R}(Y)=u(Y)&\mbox{ on }&\partial(B_{2R}\cap Cone_{\Sigma}).
        \end{aligned}\right.
        \end{equation*}

        We first claim that
        \begin{equation}\label{eq. error between u and its harmonic replacement}
            h_{R}(X)\leq u(X)\leq h_{R}(X)+\Psi_{0}(X)\quad\mbox{in }B_{2R}\cap Cone_{\Sigma}.
        \end{equation}
        In fact, since
        \begin{equation*}
            \left\{\begin{aligned}
                &-\Delta u\geq-\Delta h_{R}&\mbox{in }&B_{2R}\cap Cone_{\Sigma},\\
                &u(Y)=h_{R}(Y)&\mbox{on }&\partial(B_{2R}\cap Cone_{\Sigma}),
            \end{aligned}\right.
        \end{equation*}
        the first inequality in \eqref{eq. error between u and its harmonic replacement}  follows from the maximum principle. For the second inequality, observe that
 \begin{equation*}
            \left\{\begin{aligned}
                &-\Delta(h_{R}+\Psi_{0})=f(X)\Psi_{0}(X)^{-\gamma}>f(X)\cdot(h_{R}+\Psi_{0})^{-\gamma}&\mbox{in }&B_{2R}\cap Cone_{\Sigma},\\
                &u(Y)\leq h_{R}(Y)+\Psi_{0}(Y)&\mbox{on }&\partial(B_{2R}\cap Cone_{\Sigma}),
            \end{aligned}\right.
        \end{equation*}
The comparison principle for \eqref{eq. SLEF} (see \cite[Lemma 2.1]{GLZ25}) then yields the second inequality in  \eqref{eq. error between u and its harmonic replacement}. 

        Next, by the boundary Harnack principle for the Laplace equation (see Lemma~\ref{lem. classical boundary Harnack} and Remark~\ref{rmk. boundary harnack remark}), there exist a universal constant $C_{2}$ depending only on the domain $Cone_{\Sigma}$ and some $L_{R}>0$ depending on $u$ and $R$, such that
        \begin{equation}\label{eq. BHP for h_R}
            L_{R}H(X)\leq h_{R}(X)\leq C_{2}L_{R}H(X)\quad\mbox{in }B_{R}\cap Cone_{\Sigma}.
        \end{equation}

        Setting $X=\vec{e}_{n}\subseteq B_{R}\cap Cone_{\Sigma}$ in \eqref{eq. error between u and its harmonic replacement} and \eqref{eq. BHP for h_R} yields
        \begin{equation*}
            L_{R}H(\vec{e}_{n})\leq h_{R}(\vec{e}_{n})\leq u(\vec{e}_{n}),\quad\mbox{for all }R\geq2.
        \end{equation*}
        This gives a uniform upper bound for $L_{R}$, which together with \eqref{eq. error between u and its harmonic replacement} and \eqref{eq. BHP for h_R} gives that
        \begin{equation*}
            u(X)\leq C_{2}L_{R}H(X)+\Psi_{0}(X)\leq C_{2}\frac{u(\vec{e}_{n})}{H(\vec{e}_{n})}H(X)+\Psi_{0}(X)\quad\mbox{in }B_{R}\cap Cone_{\Sigma}.
        \end{equation*}
        Notice that by Definition~\ref{def. H(X)}, Theorem~\ref{thm. construction} (1), and the assumption $\alpha<\phi$, one has
        \begin{equation*}
            \|H\|_{L^{\infty}(B_{R}\cap Cone_{\Sigma})}=R^{\phi}\quad\mbox{and }\|\Psi\|_{L^{\infty}(B_{R}\cap Cone_{\Sigma})}\leq C_{0}R^{\alpha}\leq C_{0}R^{\phi},
        \end{equation*}
        it then follows that
        \begin{equation*}
            u(X)\leq\Big(C_{2}\frac{u(\vec{e}_{n})}{H(\vec{e}_{n})}+C_{0}\Big)\cdot R^{\phi}=:C(u)\cdot R^{\phi}\quad\mbox{in }B_{R}\cap Cone_{\Sigma}.
        \end{equation*}
        Hence, the proof of Lemma~\ref{lem. growth rate at infinity} is completed.
\end{proof}

As a consequence, we see that a global solution $u$ must be bounded from above by some $\Psi_{\overline{K}}$ everywhere in $Cone_{\Sigma}$, where $\overline{K}$ is non-universal constant depending on $u$.
\begin{corollary}\label{cor. bounded by psi K}
    Assume that $\alpha<\phi$, $f(X)$ is locally Dini continuous in $Cone_{\Sigma}$, with $0<\lambda\leq f(X)\leq\Lambda$. Let $u$ be a global solution to \eqref{eq. main}, then there exists a constant $\overline{K}=\overline{K}(u)>0$ such that \[u(X)\leq\Psi_{\overline{K}}(X)\ \mbox{in}\  Cone_{\Sigma}.\] 
\end{corollary}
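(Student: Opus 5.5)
We will show $u\leq\Psi_{\overline{K}}$ by comparing $u$ with the truncated solutions $\Psi_{\overline{K},R}$ from the proof of Theorem~\ref{thm. construction}~(2) and then letting $R\to\infty$. The constant $\overline{K}$ comes from the proof of Lemma~\ref{lem. growth rate at infinity}, which gives the sharper statement
\begin{equation*}
    u(X)\leq C_{2}\frac{u(\vec{e}_{n})}{H(\vec{e}_{n})}H(X)+\Psi_{0}(X)\quad\mbox{in }B_{R}\cap Cone_{\Sigma},\ \mbox{for all }R\geq2.
\end{equation*}
The right-hand side does not depend on $R$, so the inequality holds in all of $Cone_{\Sigma}$. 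Set
\begin{equation*}
    \overline{K}:=C_{2}\frac{u(\vec{e}_{n})}{H(\vec{e}_{n})},
\end{equation*}
which gives $u\leq\Psi_{0}+\overline{K}H$ everywhere in $Cone_{\Sigma}$. The remaining task is to improve this upper bound from $\Psi_0+\overline{K}H$ to the genuine solution $\Psi_{\overline{K}}$.

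Fix $R>0$. The function $\Psi_{\overline{K},R}$ solves \eqref{eq. slope solution in bounded interval} in $Cone_{\Sigma}\cap B_{R}$ with boundary data $\Psi_{0}+\overline{K}H$ on $\partial(Cone_{\Sigma}\cap B_{R})$. On $\partial B_{R}\cap Cone_{\Sigma}$ we have $u\leq\Psi_{0}+\overline{K}H=\Psi_{\overline{K},R}$ by the global bound just obtained. On $\partial Cone_{\Sigma}\cap B_{R}$ both functions vanish. Both $u$ and $\Psi_{\overline{K},R}$ satisfy the same equation \eqref{eq. SLEF} in $Cone_{\Sigma}\cap B_{R}$, and the nonlinearity $f(X)s^{-\gamma}$ is decreasing in $s$. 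The comparison principle \cite[Lemma 2.1]{GLZ25} therefore applies, as it did for the monotonicity of $\Psi_{K,R}$ in $R$, and gives $u\leq\Psi_{\overline{K},R}$ in $Cone_{\Sigma}\cap B_{R}$.

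Finally, fix $X\in Cone_{\Sigma}$. For every $R>|X|$ we have $u(X)\leq\Psi_{\overline{K},R}(X)$. In the proof of Theorem~\ref{thm. construction}~(2), $\Psi_{\overline{K},R}$ decreases in $R$ to $\Psi_{\overline{K}}$, so letting $R\to\infty$ gives $u(X)\leq\Psi_{\overline{K}}(X)$.

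The only delicate point is the boundary comparison on $\partial Cone_{\Sigma}$. There both functions vanish and the right-hand side of the equation is singular, so the comparison principle must be the version from \cite{GLZ25} that handles zero Dirichlet data for \eqref{eq. SLEF}. The paper already relies on this version in its earlier comparisons, and our use of it here has the same form, so I expect this step to go through without difficulty.
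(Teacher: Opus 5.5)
Your proposal is correct and follows essentially the paper's route. You secure $u\le\Psi_{0}+\overline{K}H$ on $\partial B_{R}\cap Cone_{\Sigma}$, compare with $\Psi_{\overline{K},R}$ via \cite[Lemma 2.1]{GLZ25}, and pass to the decreasing limit $\Psi_{\overline{K},R}\to\Psi_{\overline{K}}$. The one difference is where $\overline{K}$ comes from. You read off $\overline{K}=C_{2}u(\vec{e}_{n})/H(\vec{e}_{n})$ directly from the penultimate display in the proof of Lemma~\ref{lem. growth rate at infinity}; that bound is indeed $R$-independent and hence global. The paper instead uses only the stated $C(u)R^{\phi}$ bound of that lemma, then runs a second harmonic replacement and boundary Harnack argument at each scale. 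Your shortcut is valid and slightly more economical.
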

\begin{proof}
 For any $R\geq 2$, let $h_{R}$ denote the harmonic replacement of $u$ in $B_{R}\cap Cone_{\Sigma}$, defined by
    \begin{equation*}
        \left\{\begin{aligned}
            &-\Delta h_{R}(X)=0&\mbox{ in }&B_{R}\cap Cone_{\Sigma},\\
            &h_{R}(Y)=u(Y)&\mbox{ on }&\partial(B_{R}\cap Cone_{\Sigma}).
        \end{aligned}\right.
    \end{equation*}
    Analogous  to the argument in Lemma~\ref{lem. growth rate at infinity}, there holds
\begin{equation}\label{est-u}
         h_{R}(X)\leq u(X)\leq h_{R}(X)+\Psi_{0}(X)\ \mbox{in}\ B_{R}\cap Cone_{\Sigma}.
    \end{equation}
      Moreover, by Lemma~\ref{lem. growth rate at infinity}, for sufficiently large $R$, we have
    \begin{equation*}
        h_R(\frac{R\vec{e}_{n}}{2})\leq u(\frac{R\vec{e}_{n}}{2})\leq C_{1}(u)\cdot R^{\phi},
    \end{equation*}
  where  $C_{1}(u)>0$ depends on 
$u$ but is independent of 
$R$. 
Since  $H(\frac{R\vec{e}_{n}}{2})\sim R^{\phi}$, applying
%\[\frac{h_R(\frac{R\vec{e}_{n}}{2})}{H(\frac{R\vec{e}_{n}}{2})}\leq C_1(u)\]
  the boundary Harnack principle (see Lemma~\ref{lem. classical boundary Harnack} and Remark~\ref{rmk. boundary harnack remark}) yields
  \begin{equation}\label{est-h}
   h_{R}(X)\leq C_{2}\cdot C_{1}(u)H(X)\ \mbox{in}\ B_{R/2}\cap Cone_{\Sigma}   
  \end{equation} for some universal constant $C_{2}>0$.

Now set $\overline{K}=C_{2}\cdot C_{1}(u)$, which is independent of $R$. Combining \eqref{est-u} and \eqref{est-h}, we obtain 
    \begin{equation*}
        u(Y)\leq\overline{K}\cdot H(Y)+\Psi_{0}(Y)=\Psi_{\overline{K},R/2}(Y)\quad\mbox{on }\partial(B_{R/2}\cap Cone_{\Sigma}),
    \end{equation*}
    where $\Psi_{\overline{K},R/2}$ is the solution to  the bounded-domain problem \eqref{eq. slope solution in bounded interval}.
    The maximum principle then gives
    \begin{equation*}
        u(X)\leq\Psi_{\overline{K},R/2}(X)\quad\mbox{in }B_{R/2}\cap Cone_{\Sigma}.
    \end{equation*}
    Finally,  it follows from the construction of $\Psi_{K}$ in Theorem~\ref{thm. construction} that
    \begin{equation*}
        u(X)\leq\lim_{R\to\infty}\Psi_{\overline{K},R/2}(X)=\Psi_{\overline{K}}(X)
    \end{equation*}
    for any fixed point $X\in Cone_{\Sigma}$. This completes the proof of Corollary~\ref{cor. bounded by psi K}.
\end{proof}

\section{Reduction of oscillation}
In this section, we finish the proof of Theorem~\ref{thm. classification}. The oscillation reduction estimate stated below is central to our proof. To establish it, we develop a crucial nonlinear variant of the Harnack inequalities on Lipschitz-type domains.

\begin{lemma}\label{lem. reduction of oscillation}
    Assume that $\alpha<\phi$, $0<\lambda\leq f(X)\leq\Lambda$, and $f(X)$ is locally Dini continuous in $Cone_{\Sigma}$. There exist a decreasing function $\underline{R}:(0,+\infty)\to(0,+\infty)$ and a universal constant $\sigma$, such that the following holds: For any $0\leq k<K$, and any $R\geq\underline{R}(K-k)$, if $u$ satisfies
    \begin{equation*}
        -\Delta u=f(X)\cdot u^{-\gamma}\mbox{ and }\Psi_{k}(X)\leq u(X)\leq\Psi_{K}(X)\quad\mbox{in }\mathcal{GC}_{2R},
    \end{equation*}
    then there exists $k'\leq K'$ such that $K'-k'\leq(1-\sigma)(K-k)$ and
    \begin{equation*}
        \Psi_{k'}(X)\leq u(X)\leq\Psi_{K'}(X)\quad\mbox{in }\mathcal{GC}_{R/2}.
    \end{equation*}
    Here, $\mathcal{GC}_{R}$ is the grounded cylinder given in Definition~\ref{def. cylindrical domains}.
\end{lemma}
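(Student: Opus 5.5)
Set $s_0=K-k$ and introduce the two nonnegative gaps
\begin{equation*}
w:=u-\Psi_{k}\geq0,\qquad v:=\Psi_{K}-u\geq0\qquad\mbox{in }\mathcal{GC}_{2R}.
\end{equation*}
The plan is to show that one of them is at least $c\,s_0H$ on the suspended cylinder $\mathcal{SC}_{R}$, and then to propagate this down to $\Gamma$ by a nonlinear barrier. If $w$ is the large one we raise $k$; if $v$ is the large one we lower $K$.

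\textbf{Step 1 (choice of $\underline{R}$).} By Theorem~\ref{thm. construction} (2) and (1),
\begin{equation*}
w+v=\Psi_K-\Psi_k\geq KH-(\Psi_0+kH)=s_0H-\Psi_0\geq s_0H-C|X|^{\alpha}.
\end{equation*}
On $\mathcal{SC}_{R,\delta}$ we have $H\gtrsim R^{\phi}$ (with constant depending on $\delta$ and the cone), and $\alpha<\phi$. So there is $\underline{R}(s_0)$, decreasing in $s_0$, such that $w+v\geq \tfrac12 s_0H$ on $\mathcal{SC}_{R,\delta}$ whenever $R\geq\underline R(s_0)$. This is the only place where $\underline R$ depends on $K-k$. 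In particular, at $X_0=\tfrac R2\vec e_n$ either $w(X_0)\geq\frac14 s_0H(X_0)$ (Case 1) or $v(X_0)\geq\frac14 s_0H(X_0)$ (Case 2).

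\textbf{Step 2 (interior Harnack in the suspended cylinder).} The linearization gives
\begin{equation*}
\Delta w=c(X)w,\qquad c(X)=\gamma f(X)\xi^{-\gamma-1}\geq0,
\end{equation*}
with $\xi$ between $\Psi_k$ and $u$, and similarly for $v$. Since $u,\Psi_k\geq\Psi_0$, Lemma~\ref{lem. non degenerate} applied in balls of radius $\sim\delta R$ gives $\xi\gtrsim(\delta R)^{\alpha}$ on a slightly enlarged suspended cylinder. Since $\alpha(1+\gamma)=2$, this yields $0\leq c(X)\leq C_\delta R^{-2}$, which is scale invariant. Rescaling $\mathcal{SC}_{2R,\delta/2}$ to unit size, the standard Harnack inequality for $\Delta-c$ with bounded $c\geq0$ applies, and a chain of balls gives, in Case 1, $w\geq c_1s_0R^{\phi}\geq c_1 s_0H$ on all of $\mathcal{SC}_{R,\delta}$. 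Case 2 is analogous for $v$. I expect this step, especially keeping all constants universal and independent of $k,K$ and $u$, to be the main technical obstacle. The lower bound on $\xi$ must not degenerate, and using $\Psi_0$ together with Lemma~\ref{lem. non degenerate} is what makes it work.

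\textbf{Step 3 (nonlinear boundary barrier, Case 1).} Let $\Omega=\mathcal{GC}_{R}\cap\{x_n-g(x')<\delta R\}$ be the thin strip below $\mathcal{SC}_{R,\delta}$. Let $G$ be the harmonic function in $\Omega$ with $G=H$ on the lateral boundary $\{|x'|=R\}$, $G=0$ on $\Gamma$, and $G=0$ on the top. For $s=c_1s_0$ consider
\begin{equation*}
z=\max\{\Psi_k,\ \Psi_{k+s}-sG\}.
\end{equation*}
Because $t\mapsto t^{-\gamma}$ is decreasing, $-\Delta(\Psi_{k+s}-sG)=f\Psi_{k+s}^{-\gamma}\leq f(\Psi_{k+s}-sG)^{-\gamma}$, so $z$ is a subsolution. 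We then check the boundary values of $\Omega$:
\begin{itemize}
\item[$\bullet$] on the lateral sides, Theorem~\ref{thm. construction} (3) gives $\Psi_{k+s}-sG\leq\Psi_k\leq u$;
\item[$\bullet$] on the top, $\Psi_{k+s}\leq\Psi_k+sH\leq u$ by Step 2;
\item[$\bullet$] on $\Gamma$, everything vanishes.
\end{itemize}
The comparison principle \cite[Lemma 2.1]{GLZ25} then gives $u\geq\Psi_{k+s}-sG$ in $\Omega$.

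Next, $G$ decays exponentially away from the lateral sides in a strip of relative height $\delta$. Combined with Lemma~\ref{lem. classical boundary Harnack} comparing $G$ and $H$ near $\Gamma$, this yields $G\leq\varepsilon(\delta)H$ on $\Omega\cap\mathcal{GC}_{R/2}$, with $\varepsilon(\delta)\to0$ as $\delta\to0$; we fix $\delta$ so that $\varepsilon\leq\frac12$. Theorem~\ref{thm. construction} (3) then gives $u\geq\Psi_{k+s}-\tfrac s2H\geq\Psi_{k+s/2}$ on $\Omega\cap\mathcal{GC}_{R/2}$. In the remaining part of $\mathcal{GC}_{R/2}$, Step 2 already gives $u\geq\Psi_k+sH\geq\Psi_{k+s}$. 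Thus $k'=k+\tfrac{c_1}{2}s_0$, $K'=K$, and the lemma holds with $\sigma=c_1/2$.

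\textbf{Case 2.} We argue symmetrically with the supersolution
\begin{equation*}
\min\{\Psi_K,\ \Psi_{K-s}+sG\},
\end{equation*}
which is a supersolution since $-\Delta(\Psi_{K-s}+sG)=f\Psi_{K-s}^{-\gamma}\geq f(\Psi_{K-s}+sG)^{-\gamma}$. On the lateral sides it is $\geq\Psi_K\geq u$ by Theorem~\ref{thm. construction} (3), and on the top it is $\geq u$ by Step 2 applied to $v$. This gives $K'=K-\tfrac{c_1}{2}s_0$ and $k'=k$.
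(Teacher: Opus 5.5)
Your Steps 1 and 2 are sound and match the paper's interior step: a dichotomy at a point on the axis, followed by the Harnack inequality for the linearized equation on the suspended cylinder.

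\textbf{The gap is the last step of Step 3.} From $u\geq\Psi_{k+s}-sG$ and $G\leq\frac12H$ you infer $\Psi_{k+s}-\frac s2H\geq\Psi_{k+s/2}$ ``by Theorem~\ref{thm. construction}~(3)''. That result actually gives $\Psi_{k+s}-\Psi_{k+s/2}\leq\frac s2H$, the opposite inequality, and the inequality you need is false near $\Gamma$. A difference $\Psi_{k_1}-\Psi_{k_2}$ solves $\Delta w=c(X)w$ with $c\approx\operatorname{dist}(X,\Gamma)^{-2}$, so it vanishes on $\Gamma$ strictly faster than positive harmonic functions. Already in the half-space with $f\equiv1$ and $\gamma>1$, the one-dimensional profiles give $\Psi_K-\Psi_k$ comparable to $(K^2-k^2)x_n^{2-\alpha}$ as $x_n\to0$, with $2-\alpha>1$. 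By contrast, $H=x_n$ and your $G$ is comparable to $x_n$ near $\Gamma$. Hence $\Psi_{k+s}-sG<\Psi_{k'}$ in a layer along $\Gamma$ for every $k'$, and your barrier gives nothing there beyond $u\geq\Psi_k$. Case 2 fails for the same reason, since you would need $\Psi_{K-s/2}-\Psi_{K-s}\geq\frac s2H$. Tuning $\delta$ cannot fix this: any harmonic correction has the wrong boundary decay. What is needed is a positivity principle for the singular operator $\Delta-C\operatorname{dist}(X,\Gamma)^{-2}$, not for $\Delta$.

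\textbf{How to close it.} That principle is Lemma~\ref{lem. ensure positive}, and with it your Steps 1--2 already suffice. Apply it to $v=u-\Psi_{k'}$ with $k'=k+\sigma s_0$.
\begin{itemize}
\item[(i)] Where $v<0$ one has $\Delta v=f(\Psi_{k'}^{-\gamma}-u^{-\gamma})<0$.
\item[(ii)] Where $v\geq0$, the mean value form with $\xi\geq\Psi_0\geq c\operatorname{dist}(X,\Gamma)^{\alpha}$ gives $\Delta v\leq C\operatorname{dist}(X,\Gamma)^{-2}v$.
\item[(iii)] Moreover $v=0$ on $\Gamma$.
\end{itemize}
Using Theorem~\ref{thm. construction}~(3) in its correct direction, on $\mathcal{GC}_R$ you get $-v\leq\Psi_{k'}-\Psi_k\leq\sigma s_0H\leq C_L\sigma s_0R^{\phi}$. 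Step 2, run with the $\delta$ supplied by Lemma~\ref{lem. ensure positive}, gives $v\geq(c_1-C_L\sigma)s_0R^{\phi}$ on $\mathcal{SC}_{R,\delta}$. Choosing $\sigma$ small depending on $M$ yields $\min_{\mathcal{SC}_R}v\geq M\max_{\mathcal{GC}_R}(-v)$. After normalization, the lemma gives $v\geq0$ in $\mathcal{GC}_{R/2}$. The paper takes this route, but controls $\Psi_{k'}-\Psi_k$ by a second Harnack estimate together with the ratio condition defining $\eta$.
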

\subsection{Classification of global solutions}
Let us see how Lemma~\ref{lem. reduction of oscillation} implies Theorem~\ref{thm. classification} and postpone its proof afterwards.
\begin{proof}[Proof of Theorem~\ref{thm. classification}]
    By Lemma~\ref{lem. psi 0 is the lower bound} and Corollary~\ref{cor. bounded by psi K}, there exists some $\overline{K}$ depending on $u$ such that
    \begin{equation*}
        \Psi_{0}(X)\leq u(X)\leq\Psi_{\overline{K}}(X)\quad\mbox{in }Cone_{\Sigma}.
    \end{equation*}
    For every $R>0$, define
    \begin{equation*}
        k_{R}=\sup\big\{t:u\geq\Psi_{t}\mbox{ in }\mathcal{GC}_{R}\big\},\quad K_{R}=\inf\big\{t:u\leq\Psi_{t}\mbox{ in }\mathcal{GC}_{R}\big\}.
    \end{equation*}
    Clearly, one has $0\leq k_{R}\leq K_{R}\leq\overline{K}$ for all $R>0$, and
    \begin{equation*}
        k_{R_{1}}\geq k_{R_{2}}\mbox{ and }K_{R_{1}}\leq K_{R_{2}}\quad\mbox{for all }R_{1}\leq R_{2}.
    \end{equation*}
    Then, we denote
    \begin{equation*}
        k_{\infty}=\lim_{R\to\infty}k_{R},\quad K_{\infty}=\lim_{R\to\infty}K_{R}.
    \end{equation*}
    Clearly, $0\leq k_{\infty}\leq K_{\infty}\leq\overline{K}$. Moreover, by Theorem~\ref{thm. construction} (3), we have
    \begin{equation*}
        \Psi_{k_{\infty}}(X)\leq u(X)\leq\Psi_{K_{\infty}}(X)\quad\mbox{in }Cone_{\Sigma}.
    \end{equation*}

    It suffices to show $k_{\infty}=K_{\infty}$. Suppose that the identity $k_{\infty}=K_{\infty}$ fails, i.e. $k_{\infty}<K_{\infty}$, then there exists some $\epsilon>0$ and $R_{0}>0$, such that
    \begin{equation*}
        K_{R}-k_{R}\geq\epsilon,\quad\mbox{for all }R\geq R_{0}.
    \end{equation*}
    By Lemma~\ref{lem. reduction of oscillation}, one has that
    \begin{equation*}
        K_{R/2}-k_{R/2}\leq(1-\sigma)\cdot(K_{2R}-k_{2R}),\quad\mbox{for all }R\geq\max\big\{\underline{R}(\epsilon),\frac{1}{2}R_{0}\big\}.
    \end{equation*}
    Sending $R\to\infty$ implies that
    \begin{equation*}
        K_{\infty}-k_{\infty}\leq(1-\sigma)\cdot(K_{\infty}-k_{\infty}),
    \end{equation*}
    contradicting the hypothesis that $k_{\infty}<K_{\infty}$. This completes the proof of Theorem~\ref{thm. classification}.
\end{proof}
\subsection{Proof of the oscillation reduction property}
In the remaining part of this paper, our job is to prove Lemma~\ref{lem. reduction of oscillation}. We first need the following weaker criteria to show a function is positive.
\begin{lemma}\label{lem. ensure positive}
    Let $\Gamma$ be a Lipschitz graph such that $[g]_{C^{0,1}}\leq L$. There exist constants $(M,\delta)$ depending only on $(n,L,C_{1})$ such that if the following holds:
    \begin{itemize}
        \item[(a)] $\Delta w\leq\frac{C_{1}}{dist(X,\Gamma)^{2}}\max\{w,0\}$ and $w\geq-1$ in the interior of $\mathcal{GC}_{r}$;
        \item[(b)] $w\geq M$ in $\mathcal{SC}_{r}=\mathcal{SC}_{r,\delta}(0)$ (see Definition~\ref{def. cylindrical domains});
        \item[(c)] $w=0$ on $\Gamma$, in the continuous sense or the trace sense.
    \end{itemize}
    Then it is guaranteed that $w\geq0$ in $\mathcal{GC}_{r/2}$.
\end{lemma}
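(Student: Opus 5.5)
We argue by comparison with an explicit subsolution placed in the thin layer under the suspended cylinder. After scaling we may take $r=1$; hypothesis (a) is invariant under $X\mapsto rX$ because of the factor $dist(X,\Gamma)^{-2}$. Let $U=\{|y'|<1,\ 0<y_n-g(y')<\delta\}$. By (b), $w\ge M>0$ in $\mathcal{SC}_{1}$, so it suffices to prove $w\ge0$ in $U\cap\mathcal{GC}_{1/2}$. Set
\begin{equation*}
\mathcal{L}v:=\Delta v-\frac{C_1}{dist(X,\Gamma)^2}\max\{v,0\}.
\end{equation*}
Hypothesis (a) says $\mathcal{L}w\le0$. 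The zeroth-order term is nonincreasing in $v$, so $\mathcal{L}$ satisfies a comparison principle: if $\mathcal{L}v\ge0\ge\mathcal{L}w$ in $U$ and $v\le w$ on $\partial U$, then $v\le w$ in $U$. To see this, note that on the open set $\{v>w\}$ we have $\max\{v,0\}\ge\max\{w,0\}$, hence $\Delta(v-w)\ge0$ there. Since $v-w\le0$ on $\partial U$, and hence on the boundary of $\{v>w\}$, the maximum principle forces $\{v>w\}=\emptyset$. The goal is therefore a function $v$ with $\mathcal{L}v\ge0$ in $U$, $v\le w$ on $\partial U$, and $v\ge0$ on $U\cap\mathcal{GC}_{1/2}$.

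\textbf{Step 1: regularized distance.} Let $\rho$ be a regularized distance to $\Gamma$ (Lieberman/Stein type). It is smooth in $U$ and satisfies
\begin{equation*}
c_0 d\le\rho\le C_0 d,\qquad |\nabla\rho|\ge c_0,\qquad |D^2\rho|\le C_0\rho^{-1},
\end{equation*}
with constants depending only on $(n,L)$; here $d=dist(X,\Gamma)$. A direct computation gives
\begin{equation*}
\Delta\rho^\beta\ \ge\ \beta\big((\beta-1)c_0^2-C_0\big)\rho^{\beta-2}\ \ge\ c\,\beta^2\rho^{\beta-2}.
\end{equation*}
If $\beta=\beta(n,L,C_1)\ge4$ is large, this dominates $C_1C_0^{2}\rho^{\beta}d^{-2}$. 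Hence $\mathcal{L}(a\rho^\beta)\ge\tfrac{c}{2}a\beta^2\rho^{\beta-2}$ for every $a>0$.

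\textbf{Step 2: lateral correction.} Let $\psi(y')=K(|y'|-\tfrac34)_+^4$, with $K$ universal chosen so that $\psi\ge1$ on $|y'|=1$. Define
\begin{equation*}
v=a\rho^\beta-\psi .
\end{equation*}
\emph{Where $v\le0$:} we only need $\Delta v\ge0$. This is the weak point, since $\Delta\psi\ge0$. One fix is to replace $-\psi$ by $-\psi$ plus a small multiple of a convex correction. A cleaner fix is to note that for the comparison argument one only needs $\mathcal{L}v\ge0$ on the set $\{v>w\}$, and there $v>w\ge-1$.

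\emph{Where $v>0$:} we need $\tfrac{c}{2}a\beta^2\rho^{\beta-2}\ge|\Delta\psi|$. We have $|\Delta\psi|\le CK(|y'|-\tfrac34)_+^2\le C'\sqrt{K}\,\psi^{1/2}$. On $\{v>0\}$ we have $\rho^{\beta}>\psi/a$, so
\begin{equation*}
\rho^{\beta-2}>(\psi/a)^{1-2/\beta}\ge(\psi/a)^{1/2}
\end{equation*}
when $\psi/a\le1$, using $\beta\ge4$. The required inequality then follows once $a\beta^2$ is large compared with $\sqrt{Ka}$, i.e. once $a\ge a_0(n,L,C_1)$.

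\textbf{Step 3: boundary values.} We check $v\le w$ on each part of $\partial U$. On $\Gamma$: $v\le0=w$ by (c). On the lateral boundary $|y'|=1$: $v\le a\rho^\beta-1$, and this is at most $-1\le w$ provided the layer term is handled, e.g. by also subtracting $a\rho^\beta\psi$ or by shrinking the lateral region. On the top $\{y_n-g=\delta\}$: $v\le aC\delta^\beta\le M=w$ once we set $M:=aC_0^\beta\delta^\beta$. Finally, in $U\cap\mathcal{GC}_{1/2}$ we have $\psi=0$, so $v=a\rho^\beta\ge0$. Comparison then gives $w\ge v\ge0$ there, which is the claim.

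\textbf{Main obstacle.} I expect the hardest step to be the simultaneous control of the lateral boundary and of the region $\{v\le0\}$, where $\Delta\psi>0$ spoils superharmonicity; the interior estimate of Step 2 is only a Lipschitz-domain version of the $d^\beta$ barrier. I would first try the product barrier
\begin{equation*}
v=a\rho^\beta\eta(y')-\Theta ,
\end{equation*}
with $\eta$ a cutoff and $\Theta$ the $\Delta$-harmonic measure in $U$ of the lateral boundary. Here $\Theta$ is harmonic, so its negative part is harmless. The requirement that $\Theta$ be small on $U\cap\mathcal{GC}_{1/2}$ is exactly where the freedom to take $\delta$ small is used: $\Theta\le Ce^{-c/\delta}$ by the thin-layer harmonic-measure estimate.
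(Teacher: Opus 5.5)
Your reduction to the layer $U$, the comparison principle for $\mathcal{L}$, and the check that $\rho^\beta$ is a subsolution are fine. The gap is that the proof stops where the lemma is hard: no admissible barrier is produced, and both candidates fail. Write $d=dist(X,\Gamma)$.

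\textbf{The first barrier.} Take $v=a\rho^\beta-\psi$. In the zone $\frac34<|y'|<\frac34+K^{-1/4}$, where $0<\psi<1$, you have $\Delta\psi\ge 12K(|y'|-\frac34)^2>0$. Meanwhile $a\Delta\rho^\beta=O(a\rho^{\beta-2})\to0$ as $X\to\Gamma$. So near $\Gamma$ there you get $-1<v<0$ and $\Delta v<0$. Your ``cleaner fix'' only discards $\{v\le-1\}$. Nothing in the hypotheses prevents $w$ from lying below $v$ at these points, so the comparison argument breaks down there. Tuning constants does not help. By the strong minimum principle, any $C^2$ function $\psi(y')\ge0$ vanishing on $|y'|\le\frac34$ and $\ge1$ on $|y'|=1$ has $\Delta\psi>0$ at some point where $0<\psi<1$.

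\textbf{The second barrier.} For $v=a\rho^\beta\eta-\Theta$ the failure is structural. $\Theta$ is a positive harmonic function vanishing on $\Gamma$. Already for flat $\Gamma$, the Hopf lemma gives $\Theta\ge c(\delta)\,d$ near $\Gamma\cap\mathcal{GC}_{1/2}$ with $c(\delta)>0$, exponentially small but positive. By contrast $a\rho^\beta\eta\le Ca\,d^\beta$ with $\beta\ge4$. Hence $v<0$ in a thin layer along $\Gamma$ inside $\mathcal{GC}_{1/2}$, for every choice of $a$ and $\delta$. Comparison then yields only $w\ge v$, not $w\ge0$. Two further points are also unverified: the cutoff terms $2a\nabla\rho^\beta\cdot\nabla\eta+a\rho^\beta\Delta\eta$, which $a\eta\Delta\rho^\beta$ does not dominate where $\eta$ is small, and the lateral inequality $v\le w$.

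\textbf{The underlying obstruction.} The vanishing orders at $\Gamma$ do not match. The positive part of your barrier must vanish like $d^\beta$ with $\beta$ large in order to absorb $C_1d^{-2}v_+$. Any correction controlled by a harmonic function of the whole layer vanishes only linearly. So no single-scale ``positive part minus lateral correction'' barrier can be nonnegative up to $\Gamma$.

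\textbf{The missing idea: renew the estimate at every scale.} Work along the vertical line through $(x',g(x'))$, $|x'|\le\frac12$, with boxes $Q_k=\{|y'-x'|<As_k,\ 0<y_n-g(y')<s_k\}$, $s_k=2^{-k}\delta$. Suppose $w\ge m_k$ on the top of $Q_k$ and $w\ge-\epsilon_k$ in $Q_k$.
\begin{itemize}
\item Positive side: $w+\epsilon_k\ge0$ satisfies $\Delta(w+\epsilon_k)\le C_1d^{-2}(w+\epsilon_k)$. Comparison with the linear problem gives $w\ge c_*m_k-\epsilon_k$ where $\frac{s_k}{2}\le y_n-g(y')\le s_k$ and $|y'-x'|<\frac{As_k}{2}$, with $c_*$ independent of $A$.
\item Negative side: $w$ is superharmonic on $\{w<0\}$, whose closure misses the top of $Q_k$. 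Hence $w\ge-Ce^{-cA}\epsilon_k$ on the middle half of $Q_k$, which contains $Q_{k+1}$.
\end{itemize}
Thus $m_{k+1}=c_*m_k-\epsilon_k$ and $\epsilon_{k+1}=Ce^{-cA}\epsilon_k$. Choose $A$ large, then $M\ge 2/c_*$ and $\delta=\frac{1}{2A}$. This keeps $\epsilon_k/m_k\le\frac{c_*}{2}$ for all $k$, so $w>0$ on $\{x'\}\times(g(x'),g(x')+\delta]$.

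\textbf{The paper's route.} The paper does not build a barrier here. It applies the positivity result \cite[Lemma 3.1]{GLZ25} on each vertical segment $\{x'\}\times[g(x'),g(x')+\frac r2]$, $x'\in B'_{r/2}$. The whole difficulty is delegated to that lemma.
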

\begin{proof}
    For each given $x'\in B_{r/2}'$, it follows from \cite[Lemma 3.1]{GLZ25} that $w\geq0$ on the vertical line $\{x'\}\times[g(x'),g(x')+\frac{r}{2}]$, which gives the desired estimate.
\end{proof}

In practice, we will choose a specific $C_{1}$ in Lemma~\ref{lem. ensure positive} in the following way: Let $u_{1}\geq u_{2}$ be two global solutions to \eqref{eq. main} in $Cone_{\Sigma}$, then
\begin{equation}\label{eq. lagrange MVT}
    \Delta(u_{1}-u_{2})=f(X)(u_{2}^{-\gamma}-u_{1}^{-\gamma})=\frac{\gamma\cdot f(X)}{\xi^{1+\gamma}}(u_{1}-u_{2}),\quad\xi\in[u_{2}(X),u_{1}(X)].
\end{equation}
Recall that $f(X)\leq\Lambda$, and by Lemma~\ref{lem. non degenerate},
\begin{equation*}
    \xi(X)\geq u_{2}(X)\geq c(n,\gamma)dist(X,\partial Cone_{\Sigma})^{\alpha}\geq c(n,\gamma,L)|x_{n}-g(x')|^{\alpha},
\end{equation*}
so there exists some $C_{1}$ such that
\begin{equation*}
    0\leq\frac{\gamma\cdot f(X)}{\xi^{1+\gamma}}\leq\frac{C_{1}}{dist(X,\Gamma)^{2}}.
\end{equation*}
Using such a $C_{1}$, Lemma~\ref{lem. ensure positive} then produces two constants $(\delta,M)$, and we will treat them as fixed constants in the proof of Lemma~\ref{lem. reduction of oscillation}.

Before we finally work on the proof of Lemma~\ref{lem. reduction of oscillation}, let's introduce one more constant $\eta$ depending on $\delta$. Let $\Phi(X)$ be a solution to
\begin{equation*}
    \left\{\begin{aligned}
        &-\Delta\Phi=\Lambda\cdot\Phi^{-\gamma}&\mbox{in }&Cone_{\Sigma},\\
        &\Phi=0&\mbox{on }&\partial Cone_{\Sigma},
    \end{aligned}\right.
\end{equation*}
such that $\Phi(X)\leq C|X|^{\alpha}$ (existence of which is guaranteed by Theorem~\ref{thm. construction}). In fact, $\Phi(X)$ is homogeneous with degree $\alpha$, meaning
\begin{equation*}
    \Phi(rX)=r^{\alpha}\Phi(X).
\end{equation*}
Besides, recall that in the proof of Theorem~\ref{thm. construction}, $\Psi_{0}$ is the limit of $\Psi_{0,R}$, with
\begin{equation*}
    \left\{\begin{aligned}
    &-\Delta\Psi_{0,R}(X)=f(X)\cdot\Psi_{0,R}(X)^{-\gamma}\leq \Lambda\cdot\Psi_{0,R}(X)^{-\gamma}&\mbox{ in }B_{R}\cap Cone_{\Sigma},\\
    &\Psi_{0,R}(X)=0\leq\Phi(X)&\mbox{ on }\partial (B_{R}\cap Cone_{\Sigma}),
\end{aligned}\right.
\end{equation*}
it then follows that $\Psi_{0,R}(X)\leq\Phi(X)$ in $B_{R}\cap Cone_{\Sigma}$. In conclusion,
\begin{equation}\label{eq. Psi0<=Phi}
    \Psi_{0}(X)\leq\Phi(X),\quad\mbox{for all }X\in Cone_{\Sigma}.
\end{equation}
Now, we define $\eta=\eta(\delta)>0$ to be a constant such that
\begin{equation}\label{eq. choice of eta}
    \min_{X\in\mathcal{SC}_{1,\delta}}H(X)\geq2\eta\max_{Y\in\mathcal{GC}_{1}\setminus\mathcal{SC}_{1,\delta}}H(X),\quad\min_{X\in\mathcal{SC}_{1,\delta}}\Phi(X)\geq2\eta\max_{Y\in\mathcal{GC}_{1}\setminus\mathcal{SC}_{1,\delta}}\Phi(X).
\end{equation}

We have the following nonlinear variant of Kemper's boundary Harnack principle.
\begin{lemma}\label{lem. nonlinear BHP, if case 1 holds}
    Let $(\delta,M,\eta)$ be the constants mentioned in Lemma~\ref{lem. ensure positive} and \eqref{eq. choice of eta}. Assume that $\alpha<\phi$, $0<\lambda\leq f(X)\leq\Lambda$, and $f(X)$ is locally Dini continuous in $Cone_{\Sigma}$. There exist a decreasing function $\underline{R}:(0,+\infty)\to(0,+\infty)$, such that the following holds: For any $0\leq k<K$, and any $R\geq\underline{R}(K-k)$, if $u$ satisfies
    \begin{equation*}
        -\Delta u=f(X)\cdot u^{-\gamma},\quad u(X)\geq\Psi_{k}(X)\quad\mbox{in }\mathcal{GC}_{2R},
    \end{equation*}
    vanishes on $\Gamma=\partial Cone_{\Sigma}$, and also satisfies
    \begin{equation*}
        u(\vec{P})\geq\frac{\Psi_{k}(\vec{P})+\Psi_{K}(\vec{P})}{2}\quad\mbox{for }\vec{P}=R\vec{e}_{n}.
    \end{equation*}
    Then, there exists a universal constant $C_{1}=C_{1}(n,L,\gamma,\delta,\lambda,\Lambda)$, such that
    \begin{equation}\label{eq. linearized Harnack 1}
        C_{1}^{-1}\cdot\Big(u(\vec{P})-\Psi_{k}(\vec{P})\Big)\leq u(X)-\Psi_{k}(X)\leq C_{1}\cdot\Big(u(\vec{P})-\Psi_{k}(\vec{P})\Big)\mbox{ in }\mathcal{SC}_{R}=\mathcal{SC}_{R,\delta}(0).
    \end{equation}
    Furthermore, denote
    \begin{equation}\label{eq. sigma choice to be verified in step 3}
        k'=k+\sigma(K-k),\quad\mbox{where }\sigma=\frac{1}{4(C_{1})^{2}(\frac{M}{\eta}+1)}.
    \end{equation}
    then one has
    \begin{equation}\label{eq. lower bound update to k'}
        u(X)\geq\Psi_{k'}(X)\mbox{ in }\mathcal{GC}_{R/2}.
    \end{equation}
\end{lemma}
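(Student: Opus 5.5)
Set $w_0=u-\Psi_k\geq0$ in $\mathcal{GC}_{2R}$. By \eqref{eq. lagrange MVT} it solves the linear equation $\Delta w_0=c(X)w_0$ with $0\leq c(X)=\gamma f\xi^{-1-\gamma}$, where $\xi$ lies between $\Psi_k$ and $u$. Lemma~\ref{lem. non degenerate} gives $\xi\gtrsim \operatorname{dist}(X,\Gamma)^{\alpha}$, hence $c\leq C\operatorname{dist}(X,\Gamma)^{-2}$.

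\textbf{Step 1: the Harnack estimate \eqref{eq. linearized Harnack 1}.} The suspended cylinder $\mathcal{SC}_{R,\delta}$ stays at distance $\gtrsim\delta R$ from $\Gamma$. After rescaling $X\mapsto X/R$, the coefficient becomes $R^{2}c(RX)\leq C\delta^{-2}$ on a neighbourhood of $\mathcal{SC}_{1,\delta}$. The classical interior Harnack inequality for nonnegative solutions of $\Delta w=cw$ with bounded $c$, applied on a chain of balls of radius $\sim\delta R$ connecting any point of $\mathcal{SC}_{R}$ to $\vec P$, then gives \eqref{eq. linearized Harnack 1} with $C_1$ depending on $(n,L,\gamma,\delta,\lambda,\Lambda)$. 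The number of balls is bounded in terms of $\delta$ and $L$.

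\textbf{Step 2: size of $w_0$ on $\mathcal{SC}_R$.} By Theorem~\ref{thm. construction} (2), (3) we have $\Psi_K-\Psi_k\leq (K-k)H$. From the hypothesis at $\vec P$, $w_0(\vec P)\geq \tfrac12(\Psi_K-\Psi_k)(\vec P)$. I also need a lower bound $(\Psi_K-\Psi_k)(\vec P)\gtrsim (K-k)R^{\phi}$. From \eqref{eq. Psi_K best estimate},
\[
\Psi_K(\vec P)-\Psi_k(\vec P)\geq KH(\vec P)-\Psi_0(\vec P)-kH(\vec P)\geq (K-k)H(\vec P)-CR^{\alpha}.
\]
This is at least $\tfrac12(K-k)H(\vec P)$ once $R^{\phi-\alpha}\geq C/(K-k)$. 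This condition defines the decreasing function $\underline R(K-k)$, and it is where $\alpha<\phi$ enters. Combined with Step 1, we get $w_0\geq \tfrac{1}{4C_1}(K-k)H(\vec P)$ on $\mathcal{SC}_R$.

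\textbf{Step 3: positivity of $u-\Psi_{k'}$ via Lemma~\ref{lem. ensure positive}.} Write
\[
w=u-\Psi_{k'}=w_0-(\Psi_{k'}-\Psi_k),\qquad 0\leq \Psi_{k'}-\Psi_k\leq \sigma(K-k)H.
\]
Normalize by $A=\sigma(K-k)\max_{\mathcal{GC}_R}H$ and set $\tilde w=w/A$. Then $\tilde w\geq -1$ on $\mathcal{GC}_R$, and $\tilde w=0$ on $\Gamma$. The differential inequality is $\Delta w=c_1u... $ more precisely, $\Delta w=f(\Psi_{k'}^{-\gamma}-u^{-\gamma})$. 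Where $w\leq 0$ this is $\leq 0$. Where $w>0$ it equals $\tilde c\,w$ with $\tilde c\leq C_{1}\operatorname{dist}^{-2}$, using Lemma~\ref{lem. non degenerate} for $\Psi_{k'}$. So hypothesis (a) of Lemma~\ref{lem. ensure positive} holds.

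For (b), on $\mathcal{SC}_R$ Step 2 gives
\[
\tilde w\geq \frac{(K-k)H(\vec P)/(4C_1)-\sigma(K-k)\max_{\mathcal{SC}_R}H}{\sigma(K-k)\max_{\mathcal{GC}_R}H}.
\]
Using homogeneity of $H$, the constant $\eta$ from \eqref{eq. choice of eta}, and the Harnack comparability of $H$ on $\mathcal{SC}_{1,\delta}$ (absorbed into $C_1$), this is $\gtrsim \tfrac{1}{C_1^2\sigma}-1$. The choice $\sigma=\big(4C_1^2(M/\eta+1)\big)^{-1}$ then makes it $\geq M$. Lemma~\ref{lem. ensure positive} yields $w\geq0$ in $\mathcal{GC}_{R/2}$, which is \eqref{eq. lower bound update to k'}.

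\textbf{Main obstacle.} The delicate point is the bookkeeping in Step 3. The ratio between $\max_{\mathcal{GC}_R}H$ and the values of $H$ on $\mathcal{SC}_R$ must be controlled only through $\eta$ and $C_1$, so that the stated formula for $\sigma$ works. It also has to be checked that the $\Psi_0$ error $CR^{\alpha}$ is negligible, uniformly only for $R\geq\underline R(K-k)$.
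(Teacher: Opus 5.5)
Your proof is correct. Its architecture is the paper's: linearize $u-\Psi_k$, apply interior Harnack on $\mathcal{SC}_R$, then feed a normalized $u-\Psi_{k'}$ into Lemma~\ref{lem. ensure positive}. The difference is in how hypothesis (b) of that lemma is verified.

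The paper proves a second Harnack inequality \eqref{eq. linearized Harnack 2} for $w_1=\Psi_{k'}-\Psi_k$. It then sandwiches $|w_1-\sigma(K-k)H|\le\Phi$ and $|w_2-(K-k)H|\le\Phi$, where $w_2=\Psi_K-\Psi_k$ and $\Phi$ is the degree-$\alpha$ homogeneous solution with coefficient $\Lambda$. Using both halves of \eqref{eq. choice of eta}, and $R\ge\underline{R}$ twice, it obtains $w_1(\vec P)\le 2\sigma w_2(\vec P)$ and $\min_{\mathcal{SC}_R}w_1\ge\eta\max_{\mathcal{GC}_R\setminus\mathcal{SC}_R}w_1$. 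Finally it normalizes by $\max_{\mathcal{GC}_R}(-v)$.

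You compare everything directly with $H$ instead. The bound $\Psi_{k'}-\Psi_k\le\sigma(K-k)H$ from Theorem~\ref{thm. construction} (3) has no error term. So $\alpha<\phi$ is needed only at the single point $\vec P$, and you dispense with $\Phi$, with \eqref{eq. linearized Harnack 2}, and with the $\Phi$-half of \eqref{eq. choice of eta}. The price is that $C_1$ must also dominate $\max_{\mathcal{SC}_{1,\delta}}H/H(\vec e_n)$. This is harmless, since enlarging $C_1$ preserves \eqref{eq. linearized Harnack 1}.

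Your ``$\gtrsim$'' in Step 3 should be made exact, but it does close with the stated $\sigma$. By \eqref{eq. choice of eta}, $\max_{\mathcal{GC}_1\setminus\mathcal{SC}_1}H\le\frac{1}{2\eta}\min_{\mathcal{SC}_1}H\le\frac{1}{2\eta}H(\vec e_n)$. Hence on $\mathcal{SC}_R$
\[
\tilde w\geq\frac{\frac{1}{4C_1}-\sigma C_1}{\sigma\max\{C_1,\frac{1}{2\eta}\}}=\frac{C_1M/\eta}{\max\{C_1,\frac{1}{2\eta}\}}=\min\Big\{\frac{M}{\eta},\,2C_1M\Big\}\geq M.
\]
The last inequality holds because $C_1\geq1$ and $\eta\le\frac12$; continuity of $H$ across $\{x_n-g(x')=\delta\}$ forces the latter.
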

\begin{proof}
    We first verify \eqref{eq. linearized Harnack 1}. Since $u(X)$ and $\Psi_{k}(X)$ both satisfies \eqref{eq. main}, the following linearized equation holds for their difference $w=u-\Psi_{k}\geq0$:
    \begin{equation*}
        \Delta w=f(X)\cdot\gamma\cdot\xi(X)^{-\gamma-1}\cdot w(X),\quad\mbox{where }\Psi_{k}(X)\leq\xi(X)\leq u(X).
    \end{equation*}
    By the lower bound estimate (Lemma~\ref{lem. non degenerate}), one has
    \begin{equation*}
        \xi(X)^{-\gamma-1}\leq\Psi_{k}(X)^{-\gamma-1}\leq\frac{C}{dist(X,\Gamma)^{2}}.
    \end{equation*}
    Consequently, there exists a constant $C_{2}=C_{2}(n,L,\gamma,\delta,\lambda,\Lambda)$, such that
    \begin{equation*}
        \frac{|\Delta w|}{w}\leq\frac{C_{2}}{R^{2}}\quad\mbox{in }\mathcal{GC}_{2R}\cap\{x_{n}-g(x')\geq\frac{\delta}{2}R\},
    \end{equation*}
    Applying the interior Harnack principle in $\mathcal{GC}_{2R}\cap\{x_{n}\geq\frac{\delta}{2}R\}$ gives the estimate \eqref{eq. linearized Harnack 1} with $C_{1}=C_{1}(n,L,\gamma,\delta,\lambda,\Lambda)$.
    
    Similarly, let $\widetilde{w}=\Psi_{k'}-\Psi_{k}\geq0$ for $k'$ given in \eqref{eq. sigma choice to be verified in step 3}. Using exactly the same argument as above, one has
    \begin{equation}\label{eq. linearized Harnack 2}
        C_{1}^{-1}\cdot\Big(\Psi_{k'}(\vec{P})-\Psi_{k}(\vec{P})\Big)\leq\Psi_{k'}(X)-\Psi_{k}(X)\leq C_{1}\cdot\Big(\Psi_{k'}(\vec{P})-\Psi_{k}(\vec{P})\Big)\mbox{ in }\mathcal{SC}_{R}.
    \end{equation}
        
    Now, we claim that there exists a sufficiently large $\underline{R}=\underline{R}(K-k)$, such that for all $R\geq\underline{R}$, one has
    \begin{equation}\label{eq. oscilation reduction, requirement 1}
        \Psi_{k'}(\vec{P})\leq\Psi_{k}(\vec{P})+\frac{\Psi_{K}(\vec{P})-\Psi_{k}(\vec{P})}{2(C_{1})^{2}(\frac{M}{\eta}+1)}
    \end{equation}
    and
    \begin{equation}\label{eq. oscilation reduction, requirement 2}
        \min_{X\in\mathcal{SC}_{R}}\Big(\Psi_{k'}(X)-\Psi_{k}(X)\Big)\geq\eta\max_{X\in\mathcal{GC}_{R}\setminus\mathcal{SC}_{R}}\Big(\Psi_{k'}(X)-\Psi_{k}(X)\Big).
    \end{equation}
    The reason for \eqref{eq. oscilation reduction, requirement 1} and \eqref{eq. oscilation reduction, requirement 2}, as well as the existence of $\underline{R}(K-k)$ will be given later.
    
    Let us temporarily assume the correctness of the claim. Set
    \begin{equation*}
        v(X)=u(X)-\Psi_{k'}(X).
    \end{equation*}
    As $\displaystyle u(\vec{P})\geq\frac{\Psi_{k}(\vec{P})+\Psi_{K}(\vec{P})}{2}$, it follows from \eqref{eq. oscilation reduction, requirement 1} that
    \begin{equation*}
        u(\vec{P})-\Psi_{k}(\vec{P})\geq(C_{1})^{2}(\frac{M}{\eta}+1)\Big(\Psi_{k'}(\vec{P})-\Psi_{k}(\vec{P})\Big).
    \end{equation*}
    By \eqref{eq. linearized Harnack 1} and \eqref{eq. linearized Harnack 2}, the estimate above implies
    \begin{equation*}
        u(X)-\Psi_{k}(X)\geq(\frac{M}{\eta}+1)\Big(\Psi_{k'}(X)-\Psi_{k}(X)\Big)\quad\mbox{in }\mathcal{SC}_{R}.
    \end{equation*}
    In other words,
    \begin{equation}\label{eq. v(X) lower bound from step 1}
        v(X)\geq\frac{M}{\eta}\Big(\Psi_{k'}(X)-\Psi_{k}(X)\Big)\quad\mbox{in }\mathcal{SC}_{R}.
    \end{equation}
    One the other hand, we notice that the assumption $u(X)\geq\Psi_{k}(X)$ implies
    \begin{equation*}
        \Psi_{k'}(X)-\Psi_{k}(X)\geq-v(X)\quad\mbox{in }\mathcal{GC}_{R}.
    \end{equation*}
    Combining this estimate with \eqref{eq. oscilation reduction, requirement 2} and \eqref{eq. v(X) lower bound from step 1} yields that
    \begin{align*}
        \min_{X\in\mathcal{SC}_{R,\delta}}v(X)\geq&\frac{M}{\eta}\min_{X\in\mathcal{SC}_{R}}\Big(\Psi_{k'}(X)-\Psi_{k}(X)\Big)\\
        \geq&M\max_{X\in\mathcal{GC}_{R}\setminus\mathcal{SC}_{R}}\Big(\Psi_{k'}(X)-\Psi_{k}(X)\Big)\geq M\cdot\max_{X\in\mathcal{GC}_{R}}\Big(-v(X)\Big).
    \end{align*}
    Applying Lemma~\ref{lem. ensure positive} to $v(X)$ implies that $v(X)\geq0$ in $\mathcal{GC}_{R/2}$, thus \eqref{eq. lower bound update to k'} is verified.
        
    It remains to show \eqref{eq. oscilation reduction, requirement 1} and \eqref{eq. oscilation reduction, requirement 2}, as well as the existence of $\underline{R}(K-k)$. For simplicity, we denote
    \begin{equation*}
        w_{1}(X)=\Psi_{k'}(X)-\Psi_{k}(X),\quad w_{2}(X)=\Psi_{K}(X)-\Psi_{k}(X).
    \end{equation*}
    Then \eqref{eq. oscilation reduction, requirement 1} and \eqref{eq. oscilation reduction, requirement 2} can be interpreted as
    \begin{equation}\label{eq. requirements (interpreted)}
        w_{1}(\vec{P})\leq2\sigma w_{2}(\vec{P})\quad\mbox{and }\min_{X\in\mathcal{SC}_{R}}w_{1}(X)\geq\eta\max_{X\in\mathcal{GC}_{R}\setminus\mathcal{SC}_{R}}w_{1}(X).
    \end{equation}
        
    To see this, we can combine the estimates \eqref{eq. Psi_K best estimate}, \eqref{eq. Psi0<=Phi}, and \eqref{eq. sigma choice to be verified in step 3}, and get that
    \begin{align*}
        \sigma(K-k)H(X)-\Phi(X)\leq&w_{1}(X)\leq\sigma(K-k)H(X)+\Phi(X),\\
        (K-k)H(X)-\Phi(X)\leq&w_{2}(X)\leq(K-k)H(X)+\Phi(X).
    \end{align*}
        
    Let $\vec{Q}=\vec{e}_{n}=\frac{1}{R}\cdot\vec{P}$. Using the assumption $\alpha<\phi$, we see that as long as
    \begin{equation}\label{R(K-k) require 1}
        R\geq\Big[\frac{10}{\sigma(K-k)}\cdot\frac{\Phi(\vec{Q})}{H(\vec{Q})}\Big]^{\frac{1}{\phi-\alpha}},
    \end{equation}
    the homogeneity of $H(X)$ and $\Phi(X)$ implies that
    \begin{equation*}
        \frac{w_{1}(\vec{P})}{w_{2}(\vec{P})}\leq\frac{\sigma(K-k)H(\vec{P})+\Phi(\vec{P})}{(K-k)H(\vec{P})-\Phi(\vec{P})}=\frac{\sigma(K-k)H(\vec{Q})+\Phi(\vec{Q})R^{\alpha-\phi}}{(K-k)H(\vec{Q})-\Phi(\vec{Q})R^{\alpha-\phi}}\leq2\sigma.
    \end{equation*}
    This verifies the first inequality of \eqref{eq. requirements (interpreted)}.
    
    Next, for any $X\in\mathcal{SC}_{R}$ and $Y\in\mathcal{GC}_{R}\setminus\mathcal{SC}_{R}$, we let
    \begin{equation*}
        X_{0}=\frac{X}{R}\in\mathcal{SC}_{1,\delta},\quad Y_{0}=\frac{Y}{R}\in\mathcal{GC}_{1}\setminus\mathcal{SC}_{1,\delta}.
    \end{equation*}
    We then have
    \begin{equation*}
        \frac{w_{1}(X)}{w_{1}(Y)}\geq\frac{\sigma(K-k)H(X_{0})-\Phi(X_{0})R^{\alpha-\phi}}{\sigma(K-k)H(Y_{0})+\Phi(Y_{0})R^{\alpha-\phi}}.
    \end{equation*}
    Notice that $H$ is strictly positive in $\overline{\mathcal{SC}_{1,\delta}}$, then as long as
    \begin{equation}\label{R(K-k) require 2}
        R\geq\Big[\frac{10}{\sigma(K-k)}\cdot\frac{\sup _{\mathcal{SC}_{1,\delta}}\Phi}{\inf _{\mathcal{SC}_{1,\delta}}H}\Big]^{\frac{1}{\phi-\alpha}},
    \end{equation}
    it holds that
    \begin{equation*}
        \sigma(K-k)H(X_{0})-\Phi(X_{0})R^{\alpha-\phi}\geq\frac{1}{2}\Big(\sigma(K-k)H(X_{0})+\Phi(X_{0})R^{\alpha-\phi}\Big).
    \end{equation*}
    By recalling the property of $\eta$ in \eqref{eq. choice of eta}, we have verified the second inequality of \eqref{eq. requirements (interpreted)}.

    Finally, the existence of $\underline{R}(K-k)$ follows from \eqref{R(K-k) require 1} and \eqref{R(K-k) require 2}. Hence the claim is verified, and we have finished the proof of Lemma~\ref{lem. nonlinear BHP, if case 1 holds}.
\end{proof}

Similarly, we have the following variant of Lemma~\ref{lem. nonlinear BHP, if case 1 holds}. The proof is omitted.
\begin{lemma}\label{lem. nonlinear BHP, if case 2 holds}
    Let $(\delta,M,\eta)$ be the constants mentioned in Lemma~\ref{lem. ensure positive} and \eqref{eq. choice of eta}. Assume that $\alpha<\phi$, $0<\lambda\leq f(X)\leq\Lambda$, and $f(X)$ is locally Dini continuous in $Cone_{\Sigma}$. There exist a decreasing function $\underline{R}:(0,+\infty)\to(0,+\infty)$, such that the following holds: For any $0\leq k<K$, and any $R\geq\underline{R}(K-k)$, if $u$ satisfies
    \begin{equation*}
        -\Delta u=f(X)\cdot u^{-\gamma}\mbox{ and }u(X)\leq\Psi_{K}(X)\quad\mbox{in }\mathcal{GC}_{2R},
    \end{equation*}
    and also satisfies
    \begin{equation*}
        u(\vec{P})\leq\frac{\Psi_{k}(\vec{P})+\Psi_{K}(\vec{P})}{2}\quad\mbox{for }\vec{P}=R\vec{e}_{n}.
    \end{equation*}
    Then, there exists a universal constant $C_{1}=C_{1}(n,L,\gamma,\delta,\lambda,\Lambda)$, such that
    \begin{equation*}
        C_{1}^{-1}\cdot\Big(\Psi_{K}(\vec{P})-u(\vec{P})\Big)\leq\Psi_{K}(X)-u(X)\leq C_{1}\cdot\Big(\Psi_{K}(\vec{P})-u(\vec{P})\Big)\mbox{ in }\mathcal{SC}_{R}.
    \end{equation*}
    Furthermore, denote
    \begin{equation*}
        K'=K-\sigma(K-k),\quad\mbox{where }\sigma=\frac{1}{4(C_{1})^{2}(\frac{M}{\eta}+1)},
    \end{equation*}
    then one has
    \begin{equation}\label{eq. upper bound update to K'}
        u(X)\leq\Psi_{K'}(X)\mbox{ in }\mathcal{GC}_{R/2}.
    \end{equation}
\end{lemma}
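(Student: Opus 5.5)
The plan is to mirror the proof of Lemma~\ref{lem. nonlinear BHP, if case 1 holds}, now working with the nonnegative difference $w=\Psi_{K}-u$ instead of $u-\Psi_{k}$. Since $u\leq\Psi_{K}$ in $\mathcal{GC}_{2R}$ and both functions solve \eqref{eq. SLEF}, the mean value identity \eqref{eq. lagrange MVT} gives
\begin{equation*}
\Delta w=\gamma f(X)\xi^{-\gamma-1}w,\qquad u\leq\xi\leq\Psi_{K}.
\end{equation*}
Because $u$ is itself a solution, Lemma~\ref{lem. non degenerate} yields $\xi\geq u\geq c\,dist(X,\Gamma)^{\alpha}$. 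Hence $0\leq\Delta w/w\leq C\,dist(X,\Gamma)^{-2}\leq C_{2}R^{-2}$ on $\mathcal{GC}_{2R}\cap\{x_{n}-g(x')\geq\frac{\delta}{2}R\}$. The interior Harnack inequality, applied on a chain of balls covering $\mathcal{SC}_{R}$, then gives the two-sided estimate for $\Psi_{K}-u$ with a constant $C_{1}$. The same argument gives the analogous estimate for $\widetilde w=\Psi_{K}-\Psi_{K'}\geq0$, because $\Psi_{K'}\leq\Psi_{K}$ by Theorem~\ref{thm. construction}(3).

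Next I will check the two geometric requirements, with $w_{1}=\Psi_{K}-\Psi_{K'}$ and $w_{2}=\Psi_{K}-\Psi_{k}$:
\begin{equation*}
w_{1}(\vec P)\leq2\sigma w_{2}(\vec P),\qquad \min_{\mathcal{SC}_{R}}w_{1}\geq\eta\max_{\mathcal{GC}_{R}\setminus\mathcal{SC}_{R}}w_{1}.
\end{equation*}
By \eqref{eq. Psi_K best estimate} and \eqref{eq. Psi0<=Phi}, one has $|w_{1}-\sigma(K-k)H|\leq\Phi$ and $|w_{2}-(K-k)H|\leq\Phi$. So, exactly as before, the homogeneity of $H$ and $\Phi$ together with $\alpha<\phi$ makes both inequalities hold once $R\geq\underline{R}(K-k)$. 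The thresholds are the same as \eqref{R(K-k) require 1} and \eqref{R(K-k) require 2}, and they decrease in $K-k$.

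Then set $v=\Psi_{K'}-u$. The hypothesis $u(\vec P)\leq\frac{\Psi_{k}(\vec P)+\Psi_{K}(\vec P)}{2}$ gives $\Psi_{K}(\vec P)-u(\vec P)\geq\frac12 w_{2}(\vec P)\geq(C_{1})^{2}(\frac{M}{\eta}+1)w_{1}(\vec P)$. Combining this with the two Harnack estimates gives $\Psi_{K}-u\geq(\frac{M}{\eta}+1)w_{1}$ on $\mathcal{SC}_{R}$, that is, $v\geq\frac{M}{\eta}w_{1}$ there. Also $u\leq\Psi_{K}$ gives $-v\leq w_{1}$ on $\mathcal{GC}_{R}$. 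Using the $\eta$-property we get
\begin{equation*}
\min_{\mathcal{SC}_{R}}v\geq M\max_{\mathcal{GC}_{R}\setminus\mathcal{SC}_{R}}w_{1}\geq M\max_{\mathcal{GC}_{R}}(-v),
\end{equation*}
after normalizing so that $v\geq-1$; if $\max(-v)\leq0$ there is nothing to prove.

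The step needing care is checking hypothesis (a) of Lemma~\ref{lem. ensure positive} for $v$, because the sign structure is reversed compared with case 1. We have $\Delta v=f(u^{-\gamma}-\Psi_{K'}^{-\gamma})$. Where $v\geq0$, i.e.\ $u\leq\Psi_{K'}$, this equals $\gamma f\zeta^{-\gamma-1}v$ with $\zeta\geq u\geq c\,dist^{\alpha}$, which is at most $C_{1}dist^{-2}v$. Where $v<0$, $\Delta v\leq0$. So $\Delta v\leq C_{1}dist(X,\Gamma)^{-2}\max\{v,0\}$, which is exactly (a). Condition (c) holds since both functions vanish on $\Gamma$. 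Lemma~\ref{lem. ensure positive} then gives $v\geq0$ in $\mathcal{GC}_{R/2}$, which is \eqref{eq. upper bound update to K'}.
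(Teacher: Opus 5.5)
Your proposal is correct and takes the route the paper intends. The paper omits this proof as the mirror image of Lemma~\ref{lem. nonlinear BHP, if case 1 holds}, and you carry out that mirror argument with $w=\Psi_{K}-u$, $w_{1}=\Psi_{K}-\Psi_{K'}$, $w_{2}=\Psi_{K}-\Psi_{k}$ and $v=\Psi_{K'}-u$. The details the paper leaves implicit are handled correctly: the lower bound on $\xi$ and $\zeta$ now comes from $u$ itself via Lemma~\ref{lem. non degenerate}, hypothesis (a) of Lemma~\ref{lem. ensure positive} follows by splitting on the sign of $v$, and the normalization $v\geq-1$ is in place.
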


Finally, we finish the proof of Lemma~\ref{lem. reduction of oscillation}.
\begin{proof}[Proof of Lemma~\ref{lem. reduction of oscillation}]
    Let $\vec{P}=R\vec{e}_{n}$. At $\vec{P}$ we have
    \begin{equation}\label{eq. alternative}
        \mbox{either }u(\vec{P})\geq\frac{\Psi_{k}(\vec{P})+\Psi_{K}(\vec{P})}{2},\quad\mbox{or }u(\vec{P})\leq\frac{\Psi_{k}(\vec{P})+\Psi_{K}(\vec{P})}{2}.
    \end{equation}
    If the first case of \eqref{eq. alternative} holds, then it follow from Lemma~\ref{lem. nonlinear BHP, if case 1 holds} that \eqref{eq. lower bound update to k'} holds with $K-k'\leq(1-\sigma)\cdot(K-k)$. Similarly, it the second case of \eqref{eq. alternative} holds, then \eqref{eq. upper bound update to K'} holds with $K'-k\leq(1-\sigma)\cdot(K-k)$. This completes the proof of Lemma~\ref{lem. reduction of oscillation}.
\end{proof}

\bibliographystyle{abbrv}
\bibliography{main}
\end{document}